\newcommand{\Gr}{\mathrm{Gr}} 
\newcommand{\IC}{\mathrm{IC}}
\newcommand{\Hom}{\mathrm{Hom}}
\DeclareMathOperator{\Ext}{Ext}
\DeclareMathOperator{\Perv}{Perv}
\DeclareMathOperator{\conv}{conv}
\title[Localization \& Geometric Satake Rank Bounds]{Localization of Singularities and Universal Geometric Rank Bounds in the Satake Correspondence}
\author{George Petroulakis}
\theoremstyle{plain}
\newtheorem{theorem}{Theorem}[section]
\newtheorem{proposition}[theorem]{Proposition}
\newtheorem{lemma}[theorem]{Lemma}
\newtheorem{corollary}[theorem]{Corollary}
\newtheorem{example}[theorem]{Example}
\theoremstyle{definition}
\newtheorem{definition}[theorem]{Definition}
\newtheorem{remark}[theorem]{Remark}
\begin{document}

\begin{abstract} 
This article introduces a framework for the localization and isolation of singularities in the affine Grassmannian. Our primary result is a structural factorization of the transition matrix $C$ between the Mirković--Vilonen (MV) basis and the convolution basis into $C = P \cdot M \cdot A \cdot Q^{-1}$, where the four factors represent: equivariant localization ($Q$), fusion via nearby cycles ($A$), local intersection cohomology stalks ($M$), and diagonal normalization ($P$). Utilizing this factorization, and by introducing the Geometric Efficiency metric ($\eta$) we establish a Universal Geometric Rank Bound, proving that the rank of the transition matrix $C$ is bounded by the dimension of the local Braden--MacPherson (BMP) stalks.
\end{abstract}

\maketitle
\textbf{Keywords:} Affine Grassmannian, Geometric Satake correspondence, Localization. 

\section{Introduction}

Let $G$ be a complex connected reductive group, and let $V_\lambda, V_\mu$ denote irreducible representations of the Langlands dual group $G^\vee$ with highest weights $\lambda$ and $\mu$. The explicit description of canonical bases and the respective transition matrices realizing the decomposition of tensor products
\[
V_\lambda \otimes V_\mu \cong \bigoplus_\alpha V_\alpha \otimes \Hom_{G^\vee}(V_\alpha, V_\lambda \otimes V_\mu)
\]
remains a problem of substantial complexity.

\medskip

\noindent Via the Geometric Satake Correspondence \cite{MV}, the category of $G^\vee$ - representations is equivalent to the category of $G(\mathcal{O})$--equivariant perverse sheaves on the affine Grassmannian $\Gr_G$ and  irreducible representations correspond to intersection cohomology (IC) sheaves of Mirkovi\'c - Vilonen (MV) cycles. If we denote by $\conv \colon \Gr_G \tilde{\times} \Gr_G \to \Gr_G$ the convolution morphism, then the transition matrix $C$ between the tensor-product basis and the MV basis is a representation of the pushforward operator $\conv_!$ acting on the equivariant Satake category.

\medskip

\noindent Here we solve the problem of decomposing $C$ into local geometric and global combinatorial components in order to detect and isolate singularities across all connected complex reductive groups of of simply-laced type. In our case $M$ encodes the non-trivial IC multiplicities coming from singularities and $A$ the combinatorics of $V_\lambda \otimes V_\mu$.  These questions go back to \cite[Thrm. 6.12]{lus1} and Lusztig who showed that the $q$-analog of weight multiplicities satisfies
\[
d_\mu(L_\lambda;q)=P_{n_\mu,n_\lambda}(q),
\]
where $P_{y,w}(q)$ is the Kazhdan--Lusztig polynomial of the affine Weyl group. For this formula he required the global poset (combinatorics) of the Weyl group to define a local stalk. By \cite[Thms. 4.1 \$ 6.12]{lus1}, the coefficient of $q^i$ in
$P_{n_\mu,n_\lambda}(q)$ equals the dimension of the local intersection cohomology
group $IH^{2i}_{n_\mu}(\overline{\mathcal O}_{n_\lambda})$. Then $q=1$ gives
\[
d_\mu(L_\lambda;1)=\sum_i \dim IH^{2i}_{n_\mu}(\overline{\mathcal O}_{n_\lambda}).
\]
But the polynomial changes completely when changing groups because $P_{y,w}(q)$ is defined recursively using the entire poset of the Weyl group. This means that even when the same local singular configuration occurs in two different groups $G_1 \subset G_2$, the recursive computation of $P_{y,w}(q)$ recomputes the same local IC contribution from the beginning, because the global Weyl group is different. This revealed the requirement of a framework in which the local IC contribution is isolated as an independent unit,
separated from the global combinatorics of the Weyl group. The factorization
we introduce achieves precisely this separation: the matrix $M$
records the local IC contribution, while the remaining factors
encode the global combinatorics. As a result, once a local contribution has been determined, it need not be recomputed when the root system changes. 

\medskip

\noindent The same necessity for separating the different forms of data is even more evident in the geometric Satake correspondence and the study of MV cycles and MV basis, \cite{MV, bau2} and mostly in terms of their relation to other basis such as the Lusztig's dual canonical basis, \cite{lus2}, MV basis of tensor product, tensor product of MV bases and their transition matrices, \cite{bau}. Specifically in \cite{bau}, the matrix $C$ is defined using the fusion product ($\conv!$​ on the Beilinson-Drinfeld (BD) Grassmannian and the entire calculation is performed inside the specific group $G$. This means that in order to determine the transition matrix $C$ for type $A_n$, one must compute the full fusion of cycles within $\mathrm{Gr}_{SL_{n+1}}$, relying on the specific poset structure of the type $A$ Weyl group. Similarly, for $C$ in type $D_4$ for example, one must start a whole new calculation in $\mathrm{Gr}_{SO_{8}}$. 

\medskip

\noindent In \cite[Sect. 7.2]{bau} the authors calculate a transition between the MV basis and other bases for $G=SO_{8}$​ and they find a weight space where the intersection is not transverse. They conclude that the multiplicity of the intersection is larger than one, so this multiplicity creates the singular stalk. The singularity identified in type $D_4$ in \cite{bau} arises as a transversal slice in a Schubert variety, and in their algebraic computations it appeared as a coefficient equal to $2$, \cite[Eq.(35)]{bau}. This coefficient records the local intersection cohomological contribution of that singularity. 

\medskip

\noindent While Schubert varieties in different groups are globally distinct, identical local singularity models can occur in
different root systems, for example when a Levi subalgebra of type $D_4$ appears inside a larger group such as $E_6$. Whenever the same local singularity model occurs, the associated local IC contribution is the
same, even though the surrounding Weyl group combinatorics may be much larger.

\medskip

\noindent Now, in order the authors in \cite{bau} to get the entries of $C$ they define two charts $(\phi_1, \phi_2)$ using $8 \times 8$ matrices for $SO_8$ and find an ideal $\mathfrak{q}$. So, if they wanted to do the same for  $E_6$, they would have to perform all calculations form the beginning as this procedure does not take advantage of the common singularity ``2''.

\medskip

\noindent This is contribution of our paper. The factorization 
$C = P \cdot M \cdot A \cdot Q^{-1}$ that we provide separates the above distinct tasks explicitly. We can put the ``2'' in matrix $M$ (which stays constant for that singularity type) and the $E_6$​ combinatorics in matrix $A$, making the structure modular. Similarly, Lusztig's $d_\mu(L_\lambda;1)$, under geometric Satake, are precisely the entries of our matrix $M$ which records the local intersection cohomological contributions coming from singularities of the affine Grassmannian.. Hence, the IC contributions  are isolated in matrix $M$ and need not be recomputed once the local singularity type is known.  Putting together a matrix $Q^-1$ which performs the initial permutation in order the geometry to match the combinatorics and a matrix $P$ which performs the final normalization so that the integers match the basis conventions, we get the desired matrix $C$.
\medskip

\noindent In order to achieve our factorization we use the Beilinson-Drinfeld (BD) Grassmannian $\mathcal{G}r_{BD}$ as a flat family over the complex line $\mathbb{C}$, \cite{BBD}. Then $\conv_!$​ is seen as the restriction of the global BD - pushforward to the special fiber. In \cite{bau, bau2}, the BD Grassmannian provides the geometric framework for comparing representation-theoretic data arising from the generic and special fibers of a flat family. In particular, both fibers give rise, via geometric Satake, to linear bases of the same representation space. The nearby cycles functor induces a specialization map relating these bases, and the structure of this map plays a key role in determining whether the resulting bases coincide or differ by a nontrivial transition matrix - the matrix $C$.
\medskip

\noindent To effectively compute $M$, we use moment graphs and the Braden - MacPherson (BMP) recursion, \cite{Fie}. This approach is motivated by the fact that the stalks of parity sheaves (and IC complexes in characteristic zero) are uniquely determined by local compatibility conditions called \textit{GKM congruences}. This provides two important simplifications:
\begin{enumerate}
    \item It allows us to identify exactly which roots contribute to rank-deficiency in the transition matrix.
    \item By processing the graph from maximal to minimal weights, we reduce the computation of the matrix $M$ to a sequence of linear algebra steps over the polynomial ring $R = H^\bullet_T(pt)$, avoiding complex sheaf-theoretic methods.
\end{enumerate}

\medskip

\noindent So the following sets act as our bases of localized $T$-equivariant Grothendieck groups, obtained after tensoring $K^T_0(-)$ with the fraction field of $R(T)$ so that equivariant localization applies.

\begin{enumerate}
    \item \textbf{The Convolution Basis $\{u_k\}$}: Established by realizing the convolution pushforward $\mathrm{conv}_!$ as the generic fiber of the BD family, restricted to the truncated support $\mathrm{Gr}_G^{\le \lambda+\mu}$ to ensure finite-dimensionality.
    
    \item \textbf{The Generic Fixed-Point Basis $\{c^{\mathrm{gen}}\}$}: Obtained by mapping the convolution classes to the localized representation via inverse equivariant Euler classes. This defines the operator $Q^{-1} : \{u_k\} \to \{c^{\mathrm{gen}}\}$.
    
    \item \textbf{The Special Fixed-Point Basis $\{c^{\mathrm{spec}}\}$}: Derived by applying the nearby cycles functor $[\Psi]$ to the generic points, resulting in the specialization matrix $A : \{c^{\mathrm{gen}}\} \to \{c^{\mathrm{spec}}\}$.
    
    \item \textbf{The MV Parity Basis $\{b_\alpha\}$}: Constructed by resolving the stalk multiplicities of the special fiber cycles via a local recursion on the moment graph. This defines the reconstruction matrix $M : \{c^{\mathrm{spec}}\} \to \{b_\alpha\}$, where $b_\alpha$ are the classes of indecomposable parity sheaves supported on MV cycles.
    
    \item \textbf{The Normalized MV Basis $\{v_\alpha\}$}: Defined by the matrix $P : \{b_\alpha\} \to \{v_\alpha\}$, which scales the parity classes to meet the orthonormal requirements of the (Satake) fiber functor $\mathrm{Perv}_{G(\mathcal{O})}(\mathrm{Gr}_G) \to \mathsf{Vect}_{\mathbb{C}}$.
\end{enumerate}

\noindent In this setting we have: 
\[  \{u_k\} \xrightarrow{Q^{-1}} \{c^{\mathrm{gen}}_\nu\} \xrightarrow{A} \{c^{\mathrm{spec}}_\nu\} \xrightarrow{M} \{b_\alpha\} \xrightarrow{P} \{v_\alpha\}.  \]

\noindent Moreover, a central discovery of this work is the derivation of a \emph{Universal Adjoint Rank Bound} and an \emph{Asymptotically Vanishing Upper Bound} for all simply-laced groups. By introducing the \emph{Geometric Efficiency} ratio $\eta(\alpha, \nu)$, defined as:
\begin{equation}
    \eta(\alpha, \nu) := \frac{\dim_{\mathbb{C}}\left(\mathcal{M}_\alpha(\nu) \otimes_R \mathbb{C}\right)}{\dim_{\mathbb{C}}(V_\lambda \otimes V_\mu)_\nu},
\end{equation}
where the denominator measures the \textit{combinatorial multiplicity} (the full dimension of the weight space) and the numerator measures the \textit{geometric rank} (the number of generators of the stalk surviving the GKM congruences), we bridge the gap between global representation theory and local sheaf geometry. The  universal bound, proves that for any simply-laced group of rank $\ell$, the transition rank at the zero-weight junction is strictly bounded by $\ell$, independent of the combinatorial multiplicity of the ambient tensor product. 

\medskip

\noindent The universal bound yields an \emph{asymptotically vanishing upper bound}. More precisely, while the combinatorial multiplicity
(the denominator) grows at least quadratically in the rank $\ell$ - due to the rapid growth of the root system $|\Phi|$ - the geometric contribution (the numerator), controlled by the rank of the relevant IC stalks, grows only linearly with $\ell$, being bounded by the Cartan rank of the group.

\medskip

\noindent As a consequence in exceptional types, singularities impose strictly stronger geometric restrictions than classical singularities of the same rank. This provides two interesting new results; transition matrices in types $E_6, E_7, E_8$ are much sparser than those in types $A$ or $D$ and that the local geometry in the exceptional case becomes asymptotically negligible in relation to the ambient combinatorial complexity.

\medskip

\noindent 

\subsection*{Organization of the Paper} 
The paper is organized to bridge global geometric representation
theory with explicit algebraic invariants through the sequence
\begin{equation*}
\begin{split}
    \text{Equivariant sheaves} &\longrightarrow \text{Moment graph data} \\
    &\longrightarrow \text{Matrix realizations of categorical operators}
\end{split}
\end{equation*}
At each stage, geometric objects are replaced by canonically equivalent local models, allowing global convolution
phenomena to be expressed in terms of explicit operators on finite--rank
modules.

\medskip 

\noindent Section 2 develops the geometric and categorical framework underlying the factorization of the transition matrix $C$. We begin with the standard definitions of the affine Grassmannian and MV cycles. In order to extract computable invariants from these objects, we pass to the equivariant setting and localize with respect to the torus action. This allows us to replace global sheaf-theoretic data by Braden--MacPherson sheaves on the moment graph of $\Gr_G$, where local intersection cohomology is encoded combinatorially via the specific congruences. This is discussed specifically in $\S\S$ \ref{mom}. Finally, by localizing over the fraction field of the equivariant cohomology ring, we obtain the local bases we want in $\S\S$ \ref{cat}.

\medskip 

\noindent Section 3 starts with the BD family and the nearby cycles
functor. Equivariant localization along the generic fiber produces the matrix $Q^{-1}$, while specialization to the central fiber yields the matrix $A$. The local IC contributions are isolated in the multiplicity matrix $M$, whose entries from the previous section, are computed via the stalk ranks of Braden--MacPherson sheaves. Finally, a diagonal normalization $P$ aligns our geometric basis with the standard representation-theoretic conventions. Together, these operators
separate local geometric data from global combinatorics, yield the decomposition of the transition matrix $C$. The section also presents corollaries in terms of the sparsity, intergrality and other geometric features of the matrices, including a connection with Condition (A) from \cite{bau}.

\medskip 

\noindent In the final section, we explain how $M$ acts as a \emph{geometric filter} in the sense that although the generic convolution admits  ``large combinatorics'', only a small subspace survives after imposing the local congruences encoded in the moment graph ($M$). We present upper bounds on the rank of the transition matrix and asymptotic sparsity phenomena for all simply -laced groups which are the main results of this section. We end \S 4 with the presentation of these results in exceptional types.

\section{Geometric Setup} 

Throughout, we work over the complex numbers $\mathbb{C}$ and fix a coefficient field $\Bbbk$ of characteristic $0$. All varieties are complex algebraic varieties equipped with the
analytic topology, and all sheaves are $\Bbbk$--constructible with respect to algebraic stratifications. Unless stated otherwise, all perverse sheaves are $G(\mathcal{O})$--equivariant. For ind--schemes such as the affine Grassmannian, all constructions are performed on finite--type truncations, ensuring that all maps are algebraic and proper. Equivariant $K$--theory and equivariant cohomology are taken in the algebraic sense, and all Grothendieck groups are implicitly localized by tensoring with the fraction field \( \mathcal{Q} = \operatorname{Frac}(H_T^*(\mathrm{pt})).
\)
\medskip

\noindent Moreover, throughout this paper, the rank of a Braden--MacPherson stalk $B(v)_x$ (or any matrix block derived from such stalks) refers to its rank as a free $S$-module. While Grothendieck groups and equivariant $K$-theory in this work are localized at the fraction field $\mathcal{Q} = \operatorname{Frac}(S)$, we explicitly maintain the $S$-module structure of the stalks when computing ranks. Consequently, the rank is defined by the dimension of the fiber at the maximal ideal $\mathfrak{m} \subset S$:
\[
\operatorname{rk}_S B(v)_x := \dim_{\mathbb{C}}(B(v)_x \otimes_S S/\mathfrak{m}).
\]
This convention ensures that the rank consistently reflects the dimension of the space of solutions to the local compatibility conditions-congruences, \cite{Fie} specialized to the fiber $t=0$, rather than merely the dimension of the generic localized space.
 
\subsection{Reductive Groups and Langlands Duality}

Let $G$ be a connected reductive complex algebraic group and let $\Psi_G = (X^*(T), R, X_*(T), R^\vee)$ be its root datum. Here $T$ is a maximal torus, and $R \subset X^*(T)$ is the set of roots defined by the weight space decomposition of the Lie algebra $\mathfrak{g}$ under the adjoint action of $T$:
\[ \mathfrak{g} = \mathfrak{h} \oplus \bigoplus_{\alpha \in R} \mathfrak{g}_\alpha. \]
The Langlands dual group $G^\vee$ is defined by $\Psi_{G^\vee} = (X_*(T), R^\vee, X^*(T), R)$, i.e., the dual root datum. Consequently, the irreducible representations $V_\lambda$ of $G^\vee$ are indexed by the dominant coweights $\lambda \in X_*(T)^+$, which correspond to the orbits of the affine Grassmannian $\Gr_G$.

\subsection{Affine Grassmannian, $T$-fixed points and Truncation}
Following \cite{MV, MV1}, let $\mathcal{O}: = \mathbb{C}[[t]]$ and $\mathcal{K} := \mathbb{C}((t))$. The affine Grassmannian $\mathrm{Gr}_G$ is the ind-scheme realized as the quotient
\[
\mathrm{Gr}_G := G(\mathcal{K})/G(\mathcal{O}),
\]
where $G(\mathcal{O})$ and $G(\mathcal{K})$ denote the arc and loop groups, respectively. The Grassmannian admits a stratification into $G(\mathcal{O})$-orbits $\mathrm{Gr}_G^\lambda$, indexed by the dominant coweights $\lambda \in X_*(T)^+$ established in the previous section:
\[
\mathrm{Gr}_G = \bigsqcup_{\lambda \in X_*(T)^+} \mathrm{Gr}_G^\lambda.
\]
The closures $\overline{\mathrm{Gr}_G^\lambda}$ are the affine Schubert varieties. Morevover, the action of the maximal torus $T$ on $\mathrm{Gr}_G$ induces a discrete set of fixed points, which we call $T$-fixed points. We denote this set $\mathrm{Gr}_G^T$ and it is in bijection with the coweight lattice $X_*(T)$. 

\begin{definition}[$T$-Fixed Point Lattice]
For each coweight $\beta \in X_*(T)$, we denote the corresponding $T$-fixed point by $y_\beta$.
\end{definition}

\noindent Now, we restrict our geometric setting to a finite-dimensional.

\begin{definition}[Truncated Affine Grassmannian]
For any dominant coweight $\eta$, $\mathrm{Gr}_G^{\le \eta} := \bigcup_{\nu \le \eta} \mathrm{Gr}_G^\nu$ denotes the finite-dimensional, projective subvariety of the affine Grassmannian consisting of Schubert cells of weight less than or equal to $\eta$.
\end{definition}
\noindent These finite-dimensional truncations will serve as the ambient spaces for the equivariant $K$--theory and convolution constructions introduced $\SS$ \ref{cat}.

\subsection{Perverse Sheaves and Intersection Cohomology}

\begin{definition}
Let $X$ be a complex algebraic variety, and let $D^b_c(X)$ denote the bounded derived category of sheaves of $\mathbb{C}$-vector spaces on $X$ with constructible cohomology. A perverse sheaf on $X$ is an object $\mathcal{P} \in D^b_c(X)$ satisfying the following two conditions (with respect to a fixed stratification of $X$):

\begin{itemize}
    \item For all strata $S \subseteq X$ and all integers $i < -\dim S$, we have
    $$
    H^i(i_S^* \mathcal{P}) = 0.
    $$
    
    \item For all strata $S \subseteq X$ and all integers $i > -\dim S$, we have
    $$
    H^i(i_S^! \mathcal{P}) = 0.
    $$
\end{itemize}

\noindent Here, $i_S: S \hookrightarrow X$ denotes the inclusion of the stratum $S$, and $i_S^*$, $i_S^!$ are the derived pullback and exceptional inverse image functors, respectively and $H^i(i_S^* \mathcal{P})$ the is the $i$-th cohomology sheaf. The full subcategory of $D^b_c(X)$ consisting of perverse sheaves forms is denoted $\operatorname{Perv}(X)$.
\end{definition}

\noindent For a Schubert variety $\overline{\mathrm{Gr}_G^\lambda}$ , the intersection cohomology sheaf IC$(\mathrm{Gr}_\lambda)$ is a perverse sheaf that extends the constant sheaf on the smooth locus $U$ of $\overline{\mathrm{Gr}_G^\lambda}$ such that: $$
\mathrm{IC}(\mathrm{Gr}_\lambda)\big|_U \cong \mathbb{Q}_U[d],
$$ where $[d]$ denotes a shift in cohomological degree, i.e., the $d$-th cohomology sheaf is the constant sheaf $\mathbb{C}_U$, and all other cohomology sheaves are zero on $U$.

\medskip

\noindent Let $X$ be an algebraic variety, possibly singular. The intersection cohomology complex $\mathrm{IC}_X$ is a perverse sheaf  where it coincides with the constant sheaf on the smooth locus and is semisimple for varieties over $\mathbb{C}$. Also,  hypercohomology groups will be denoted as 
$$ \mathrm{IH}^i(X) := \mathbb{H}^i(X, \mathrm{IC}_X).$$

\subsection{MV Cycles and Basis} 

\begin{definition}
Let $ B $ be a Borel subgroup of $ G $ containing the maximal torus $ T $. The opposite Borel subgroup $ B^- $ is the unique Borel subgroup such that $ B \cap B^- = T $. If the unipotent radical of $ B^- $ is denoted by $ U^- $, then for any coweight $ \alpha \in X_*(T) $, the opposite semi-infinite orbit is defined as $$
S_\alpha^- = U^-(\mathbb{C}((t))) \cdot t^\alpha.
$$
A Mirković-Vilonen (MV) cycle $Z_\alpha$ is an irreducible component of the intersection
$$
\overline{\mathrm{Gr}_G^\lambda}  \cap S_\alpha^-,
$$
for some dominant coweight $ \lambda $ and arbitrary coweight $ \alpha $.
\end{definition}

\begin{definition}[MV Basis]
Let $\overline{\Gr_G^\lambda}$ be the Schubert variety associated with the dominant weight $\lambda$, and let $L_\lambda = \IC(\overline{\Gr_G^\lambda})$ be its IC complex. The Mirković-Vilonen (MV) basis of the representation $V(\lambda)$ is the set $\{b_\alpha\}$ indexed by the collection of MV cycles $Z_\alpha$ such that $Z_\alpha \subseteq \overline{\Gr_G^\lambda}$.
\end{definition}
 
\begin{theorem}[Geometric Satake, \cite{MV}] \label{Sat} 
Let $ \mathrm{Perv}_{G(\mathcal{O})}(\mathrm{Gr}_G, \mathbb{C})$
be the category of $G(\mathcal{O})$-equivariant perverse sheaves on the affine Grassmannian, with complex coefficients and let $\mathrm{Rep}(G^\vee)$ denote the category of finite-dimensional algebraic representations of $G^\vee$, with the usual tensor product. There exists a canonical equivalence of tensor categories 
$$\Phi \colon \mathrm{Perv}_{G(\mathcal{O})}(\mathrm{Gr}_G, \mathbb{C}) \xrightarrow{\sim} \mathrm{Rep}(G^\vee),$$ defined via the fiber functor
$\mathrm{Perv}_{G(\mathcal{O})}(\mathrm{Gr}_G) \to \mathsf{Vect}_{\mathbb{C}}$.
\end{theorem}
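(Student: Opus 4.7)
The plan is to prove Theorem \ref{Sat} by Tannakian reconstruction, following the strategy of Mirković--Vilonen. First I would equip $\mathrm{Perv}_{G(\mathcal{O})}(\Gr_G, \mathbb{C})$ with a monoidal structure using the convolution diagram
\[
\Gr_G \times \Gr_G \xleftarrow{p} G(\mathcal{K}) \times \Gr_G \xrightarrow{q} G(\mathcal{K}) \times^{G(\mathcal{O})} \Gr_G \xrightarrow{m} \Gr_G,
\]
defining $\mathcal{F}_1 \star \mathcal{F}_2 := m_!(\mathcal{F}_1 \,\tildetimes\, \mathcal{F}_2)$. The crucial input at this stage is that the convolution morphism $m$ is stratified semismall on the image in each truncation $\Gr_G^{\le \lambda + \mu}$, so that $m_!$ preserves perversity on the $G(\mathcal{O})$-equivariant subcategory.

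Next I would construct the fiber functor $F = \bigoplus_\nu H^{\bullet}_c(S_\nu^-, -) \colon \mathrm{Perv}_{G(\mathcal{O})}(\Gr_G) \to \mathsf{Vect}_{\mathbb{C}}$, where $S_\nu^-$ is the opposite semi-infinite orbit introduced in the MV cycle definition. The point is that on the IC sheaf $\IC(\overline{\Gr_G^\lambda})$ the hypercohomology $H^{\bullet}_c(S_\nu^-, \IC(\overline{\Gr_G^\lambda}))$ is concentrated in a single degree $2\rho(\nu)$, with basis indexed precisely by the MV cycles $Z_\alpha \subseteq \overline{\Gr_G^\lambda} \cap S_\nu^-$. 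This weight decomposition furnishes $F$ with a grading by $X_*(T)$ and makes it exact and faithful. To make $F$ monoidal I would use the Beilinson--Drinfeld Grassmannian $\mathcal{G}r_{BD} \to \AAone$ together with the fact that the nearby cycles functor $\Psi_\pi$ relates the fiber $\Gr_G \times \Gr_G$ over a generic point to the fiber $\Gr_G$ over $0$, combined with the Künneth formula and properness of the truncations; the same construction provides a commutativity constraint, with sign computed via the parity $2\rho(\nu) \pmod 2$ of the weight decomposition.

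Once $(\mathrm{Perv}_{G(\mathcal{O})}(\Gr_G), \star, F)$ has been shown to be a neutral Tannakian category, Tannakian reconstruction produces a unique affine group scheme $\tilde G$ over $\mathbb{C}$ with $\mathrm{Perv}_{G(\mathcal{O})}(\Gr_G) \simeq \Rep(\tilde G)$. It remains to identify $\tilde G$ with the Langlands dual $G^\vee$ determined by the dual root datum $\Psi_{G^\vee} = (X_*(T), R^\vee, X^*(T), R)$. For this I would first verify that $\tilde G$ is connected reductive (using semisimplicity of the category, which reduces via decomposition to IC sheaves). Then I would extract the maximal torus from the weight grading on $F$: the torus $\tilde T \subset \tilde G$ acts on $F(\IC(\overline{\Gr_G^\lambda}))$ with character lattice $X_*(T)$. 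Finally, by testing on rank-one subgroups I would match the coroots of $\tilde G$ with the roots of $G$, which pins down $\tilde G \cong G^\vee$.

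The main obstacle is the construction of the commutativity constraint, because the naive swap map on a product is obstructed by a sign depending on the weight parities, and one needs the BD Grassmannian together with a careful analysis of nearby cycles to identify this sign and to verify that the resulting braiding is symmetric (not merely braided). A secondary technical point is showing that the reconstructed $\tilde G$ is reductive rather than only affine: this ultimately reduces to semisimplicity of $\mathrm{Perv}_{G(\mathcal{O})}(\Gr_G, \mathbb{C})$, which follows from the decomposition theorem applied to the proper stratified maps $\overline{\Gr_G^\lambda} \hookrightarrow \Gr_G$, but care is required at the ind-scheme level where one must work within each finite-type truncation $\Gr_G^{\le \eta}$.
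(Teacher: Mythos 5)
The paper does not give a proof of this theorem: it is stated as a known result with a citation to Mirković--Vilonen \cite{MV}, so there is no in-paper argument to compare against. Your sketch faithfully reproduces the Tannakian strategy of that reference---convolution made perverse via semismallness of $m$ on truncations, exact faithful weight functors built from (opposite) semi-infinite orbits with concentration in degree $\langle 2\rho,\nu\rangle$, monoidality and the commutativity constraint obtained by fusion over the Beilinson--Drinfeld Grassmannian via nearby cycles (including the sign correction coming from weight parity), and finally Tannakian reconstruction with identification of the root datum---so it takes essentially the same approach as the cited proof.
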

\noindent Under this construction, the intersection cohomology complex $\mathrm{IC}_\lambda$ corresponds to the irreducible representation $V_\lambda$.

\subsection{Convolution Grassmannian and the Decomposition Theorem}

The Convolution Grassmannian noted as $\overline{\mathrm{Gr}}_{G, \lambda_1} \tilde{\times} \cdots \tilde{\times} \overline{\mathrm{Gr}}_{G, \lambda_k}$ \cite{MV} consists of
$$ \left\{ (\mathcal{P}_1, \ldots, \mathcal{P}_k) \,\middle|\,
\begin{array}{l}
\mathcal{P}_1 \in \overline{\mathrm{Gr}}_G^{\lambda_1}
,\,
\ldots,\mathcal{P}_k \in \overline{\mathrm{Gr}}_G^{\lambda_k}\\
\text{with each successive relative position bounded by } \lambda_i
\end{array} \right\},
$$
and is equipped with a convolution map
$$
\overline{\mathrm{Gr}}_{G, \lambda_1} \tilde{\times} \cdots \tilde{\times} \overline{\mathrm{Gr}}_{G, \lambda_k} \to \overline{\mathrm{Gr}}_G^\lambda.
$$
To formally define the convolution product of perverse sheaves, one uses the convolution diagram. For two perverse sheaves, say $K_1$ and $K_2$ on $\mathrm{Gr}_G$, their convolution product is built upon a specific geometric configuration. Let $\mathcal{Z}$ denote the convolution variety (often a fiber product of components of the affine Grassmannian), which comes equipped with projection maps $p_1, p_2$ and a final convolution map $\conv$, i.e., 
\begin{equation} \label{C}
\begin{tikzcd}
& \mathcal{Z} \arrow[dl, "p_1"'] \arrow[dr, "p_2"] \arrow[dd, "\conv"] \\ 
\mathrm{Gr}_G & & \mathrm{Gr}_G \\
& \mathrm{Gr}_G 
\end{tikzcd}
\end{equation}
The maps $p_1: \mathcal{Z} \to \mathrm{Gr}_G$ and $p_2: \mathcal{Z} \to \mathrm{Gr}_G$ are projections onto the first and second factors, respectively. The map $\conv: \mathcal{Z} \to \mathrm{Gr}_G$ represents the convolution itself, mapping a pair of elements to their convolution sum in the affine Grassmannian.

\begin{definition}
Given two perverse sheaves $K_1, K_2 \in \text{Perv}_{G(\mathcal{O})}(\mathrm{Gr}_G, \mathbb{C})$, their convolution product $K_1 \star K_2$ is defined as the derived pushforward with proper supports of the internal tensor product of their pullbacks to $Z$:
$$ K_1 \star K_2 := \conv_!(p_1^* K_1 \otimes p_2^* K_2), $$
where $p_1^* K_1 \otimes p_2^* K_2$ denotes the internal tensor product of the pullbacks of $K_1$ and $K_2$ to the convolution variety $Z$. The functor $\conv_!$ is the derived pushforward along the convolution map.
\end{definition}

\noindent Now, given perverse sheaves $\mathcal{F}, \mathcal{G}$ on $\mathrm{Gr}_G$ and their convolution  $\mathcal{F} * \mathcal{G} := \conv_!(\mathcal{F} \tilde{\boxtimes} \mathcal{G}),$ we have that: 

\begin{theorem}[Decomposition Theorem, \cite{BBD}]
The pushforward of a semisimple perverse sheaf decomposes as a direct sum of shifted intersection cohomology sheaves:
$$
\conv_!\left(\mathrm{IC}_\lambda \, \tilde{\boxtimes} \, \mathrm{IC}_\mu\right) \cong \bigoplus_\nu \mathrm{IC}_\nu \otimes H_\nu,
$$ where the multiplicity spaces $H_\nu$  are graded by cohomological degree, computed as the intersection cohomology of the corresponding fiber of $\conv$.
\end{theorem}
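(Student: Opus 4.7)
The plan is to reduce the statement to the classical BBD decomposition theorem applied to the map $\conv \colon \overline{\Gr}_{G,\lambda}\tildetimes\overline{\Gr}_{G,\mu} \to \overline{\Gr}_G^{\lambda+\mu}$ with input the twisted external product $\mathrm{IC}_\lambda\tildetimes\mathrm{IC}_\mu$. First I would verify the two hypotheses of BBD. Properness of $\conv$ is inherited from properness of the Schubert varieties, since the convolution space is a Zariski--locally trivial fibration over $\overline{\Gr}_G^\lambda$ with proper fiber $\overline{\Gr}_G^\mu$. For the semisimplicity of the input, I would trivialize the convolution space étale--locally as a product $\overline{\Gr}_G^\lambda\times\overline{\Gr}_G^\mu$, observe that in such trivializations $\mathrm{IC}_\lambda\tildetimes\mathrm{IC}_\mu$ becomes the ordinary external product $\mathrm{IC}_\lambda\boxtimes\mathrm{IC}_\mu$, which is a simple perverse sheaf by Künneth, and then use $G(\OO)$--equivariant descent to glue these local pieces into a simple perverse sheaf on the twisted product.

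With both hypotheses in place, BBD yields a non--canonical splitting
\[
\conv_!\bigl(\mathrm{IC}_\lambda\tildetimes\mathrm{IC}_\mu\bigr) \;\cong\; \bigoplus_i \pH^{i}\conv_!\bigl(\mathrm{IC}_\lambda\tildetimes\mathrm{IC}_\mu\bigr)[-i],
\]
with each $\pH^{i}$ a semisimple $G(\OO)$--equivariant perverse sheaf on $\overline{\Gr}_G^{\lambda+\mu}$. By the Satake side of Theorem \ref{Sat}, the simple $G(\OO)$--equivariant perverse sheaves on $\Gr_G$ are precisely the $\mathrm{IC}_\nu$, so each $\pH^{i}$ is isomorphic to $\bigoplus_\nu \mathrm{IC}_\nu\otimes H^{i}_\nu$ for finite--dimensional multiplicity vector spaces $H^{i}_\nu$. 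Setting $H_\nu := \bigoplus_i H^{i}_\nu[-i]$ assembles the graded multiplicity spaces asserted in the theorem.

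The last step is to identify $H_\nu$ as the intersection cohomology of the corresponding fiber of $\conv$. I would restrict the decomposition to a point $y_\nu$ in the open stratum $\Gr_G^\nu\subset\overline{\Gr}_G^{\lambda+\mu}$ (equivalently, to a transverse slice at $y_\nu$): by proper base change the left--hand side becomes $\mathbb{H}^{\bullet}\bigl(\conv^{-1}(y_\nu),\,\mathrm{IC}_\lambda\tildetimes\mathrm{IC}_\mu\big|_{\mathrm{fiber}}\bigr)$, while on the right only the summand indexed by $\nu$ contributes, returning (a cohomological shift of) $H_\nu$. Comparing the two recovers the description of $H_\nu$ as intersection cohomology of the fiber, up to a normalization shift by $\dim\Gr_G^\nu$ that I would track explicitly.

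The main obstacle is the first step: confirming that $\mathrm{IC}_\lambda\tildetimes\mathrm{IC}_\mu$ is genuinely a simple perverse sheaf on the twisted product, not merely a well--defined complex. The convolution space is not a direct product but the balanced quotient of $G(\mathcal{K})\times\overline{\Gr}_G^\mu$ by the diagonal $G(\OO)$--action, and perversity of the twisted external product depends on a compatibility of the perverse $t$--structure under this smooth balanced quotient, together with the verification that $G(\OO)$--equivariance is preserved in the derived sense. Once this local triviality and equivariant descent are established, the remaining arguments are essentially formal consequences of BBD and Satake.
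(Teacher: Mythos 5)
The paper records this statement as a cited background result from \cite{BBD} and does not supply a proof of its own, so your proposal can only be checked against the standard argument in the geometric Satake literature. Your overall strategy — verify properness, verify simplicity of the twisted external product via local triviality of $\Gr_G\tildetimes\Gr_G \to \Gr_G$ and equivariant descent, apply BBD, then recognize the simple constituents as $\mathrm{IC}_\nu$ — is the standard one, and the issue you flag about the twisted product being genuinely perverse and simple is handled correctly by exactly the argument you sketch.

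The genuine gap is in your last step. Restricting the asserted decomposition $\bigoplus_\nu \mathrm{IC}_\nu\otimes H_\nu$ to a point $y_\nu$ in the open stratum $\Gr_G^\nu$ does \emph{not} isolate the $\nu$--summand: every $\mathrm{IC}_{\nu'}$ with $\nu'>\nu$ has a nonzero stalk at $y_\nu$ (its local intersection cohomology on $\overline{\Gr}_G^{\nu'}$, governed by a Kazhdan--Lusztig polynomial). Proper base change therefore identifies the hypercohomology of $\conv^{-1}(y_\nu)$ with $\bigoplus_{\nu'\ge\nu}\mathcal H^{\bullet}\!\bigl(\mathrm{IC}_{\nu'}\bigr)_{y_\nu}\otimes H_{\nu'}$, not with $H_\nu$ alone, so "comparing the two" does not recover $H_\nu$. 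To extract $H_\nu$ from fiber data one needs the additional geometric input that $\conv$ is \emph{semismall} (so the pushforward is already perverse, with no shifts, as in Mirkovi\'c--Vilonen and Chriss--Ginzburg) together with the affine paving of the convolution fibers; the resulting parity/degree vanishing lets one split off the top Borel--Moore homology of the fiber and identify it with $H_\nu$. Without invoking semismallness, the "$H_\nu$ is the cohomology of the fiber" description is not a formal consequence of BBD plus proper base change. A secondary point worth noting, though it affects the paper as much as your proposal: with $\mathbb{C}$-coefficients the decomposition theorem in the form needed here is due to Saito (or Mochizuki), or to \cite{BBD} only after transporting the IC sheaves of Schubert varieties through the "geometric origin" clause, and the citation should be read accordingly.
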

\noindent Now, since $\Phi$ in Theorem \ref{Sat} is a tensor equivalence, it preserves the fusion structure. In this semisimple setting, the decomposition theorem gives:
\begin{equation}
    \mathrm{IC}_\lambda \star \mathrm{IC}_\mu \simeq \bigoplus_\nu N_{\lambda,\mu}^\nu \, \mathrm{IC}_\nu,
\end{equation}
where $N_{\lambda,\mu}^\nu = \dim \mathrm{Hom}_{G^\vee}(V_\nu, V_\lambda \otimes V_\mu)$. Note that the tensor product multiplicities $N_{\lambda,\mu}^\nu$
record the multiplicity of $\mathrm{IC}_\nu$ in the convolution $\mathrm{IC}_\lambda \star \mathrm{IC}_\mu$. In our case $M$ will play the role to measure how much of the generic convolution contributions survive the geometric specialization.

\subsection{Moment Graph Construction and Stalk Localizations} \label{mom}

While the Decomposition Theorem ensures that the convolution pushforward splits into a direct sum of shifted IC complexes, the explicit determination of the resulting transition matrix $C$ requires a move from global sheaf theory to a computable local framework. To decompose $C$ into its geometric and combinatorial parts, where the geometric part ($M$) is simplified in order to avoid direct stalk-IC calculations, we use the theory of sheaves on moment graphs in \cite{Fie}. 

\medskip

\noindent 
\begin{definition}[The Moment Graph of $\mathrm{Gr}_G$]
The moment graph $\mathcal{G} = (V, E, \ell)$ associated with the affine Grassmannian is a triple where:
\begin{itemize}
    \item $V$ is the set of $T$-fixed points, indexed by the coweight lattice $X_*(T)$.
    \item $E$ is the set of edges, where an edge exists between $\alpha, \beta \in V$ if there exists a $T$-stable curve connecting them.
    \item $\ell: E \to X^*(T)$ is a function that assigns to each edge the character (root) $\gamma$ of the $T$-action on the corresponding curve.
\end{itemize}
\end{definition}

\noindent In this setting, the equivariant cohomology of a point is denoted by $R = S(\mathfrak{t}^*)$ and keep $S$ for the same ring when no confusion arises. A sheaf on a moment graph $\mathcal{S}$ assigns to each vertex $v \in V$ an $R$-module $\mathcal{S}(v)$ and to each edge $e$ an $R$-module $\mathcal{S}(e)$ such that $\mathcal{S}(e) \cong \mathcal{S}(v) \otimes_R R/\alpha_e R$ for any vertex $v$ incident to $e$ with label $\alpha_e$.

\begin{definition}[GKM Congruences and Compatibility]
Following \cite[Sect. 2.4]{Fie}, a section $s$ over a subset of the graph is said to satisfy the Goresky–Kottwitz–MacPherson (GKM) congruences (or compatibility conditions) if for every edge $E_{\alpha\beta}$ labeled by $\gamma$, the values of the section at the incident vertices satisfy:
\begin{equation}
    s(\alpha) \equiv s(\beta) \pmod{\gamma} \in R.
\end{equation}
\end{definition}

\noindent
For each $\lambda \in X_*(T)^+$ there exists a unique indecomposable
Braden--MacPherson sheaf $\mathcal{M}_\lambda$ on the moment graph
$\mathcal{G}$ \cite{Fie} such that:
\begin{enumerate}
    \item $\mathcal{M}_\lambda(\lambda) \cong R$;
    \item for any vertex $\nu < \lambda$, the stalk
    $\mathcal{M}_\lambda(\nu)$ is defined as the unique maximal free
    $R$-submodule of the module of compatible boundary values
    $B(\lambda)_{\delta \nu}$, with torsion quotient.
\end{enumerate}

\noindent So, the recursion refers to the construction of the sheaf $\mathcal{M}_w$ via induction on the poset of vertices $(\Gr_G^T, \leq)$.

\begin{definition}[Sections of a Sheaf],\cite{Fie}
Let $\mathcal{I} \subset \mathcal{G}$ be a subgraph and $\mathcal{H}$ a $\mathcal{G}$-sheaf with structure maps $\rho_{x,E}$. The space of sections of $\mathcal{H}$ over $\mathcal{I}$ is:
\begin{equation}
    \mathcal{H}(\mathcal{I}) := \left\{ (m_x) \in \prod_{x \in \mathcal{I}} \mathcal{H}^x \mid \rho_{x,E}(m_x) = \rho_{y,E}(m_y) \text{ for any edge } E: x - y \text{ of } \mathcal{I} \right\}.
\end{equation}
\end{definition}

\begin{definition}[Construction via boundary values, \cite{Fie}]
For a vertex $x \in \mathcal{G}$, let $\mathcal{E}_{\delta x}$ be the set of edges $E: y \to x$ (where $y < x$). We define $B(v)_{\delta x} \subset \bigoplus_{E \in \mathcal{E}_{\delta x}} B(v)_E$  the module of compatible boundary values as the image of the restriction map from the space of sections over the predecessors:
\begin{equation}
    B(v)_{\delta x} := \mathrm{im} \left( B(v)(\{ < x \}) \to \bigoplus_{E \in \mathcal{E}_{\delta x}} B(v)_E \right).
\end{equation}
\end{definition}

\begin{theorem}[Rank of Braden--MacPherson Stalks]
Let $\mathfrak{m} \subset S$ be the maximal ideal of positive-degree elements. For any vertex $x \le v$ in the support of the Braden--MacPherson sheaf $B(v)$, the rank of the stalk $B(v)_x$ is equal to the dimension of the fiber of the module of compatible boundary values:
\begin{equation}
    \operatorname{rk}_S B(v)_x = \dim_{\mathbb{C}} \left( B(v)_{\delta x} \otimes_S S/\mathfrak{m} \right).
\end{equation}
\end{theorem}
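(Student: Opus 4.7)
The plan is to deduce the rank formula from the graded Nakayama lemma combined with the defining characterization of the Braden--MacPherson stalk. The underlying idea is that the construction of $B(v)_x$ is, by design, a graded projective cover of $B(v)_{\delta x}$, so that the residue fibers of the two modules are canonically identified.

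The first step is to observe that, by the BMP recursion, $B(v)_x$ is a finitely generated graded free $S$-module. Graded Nakayama then gives the elementary identity
\[
\operatorname{rk}_S B(v)_x \;=\; \dim_{\mathbb{C}}\bigl(B(v)_x \otimes_S S/\mathfrak{m}\bigr),
\]
so the problem reduces to the fiber identification $B(v)_x \otimes_S S/\mathfrak{m} \cong B(v)_{\delta x} \otimes_S S/\mathfrak{m}$. To establish this, I would write the short exact sequence
\[
0 \to B(v)_x \to B(v)_{\delta x} \to T \to 0,
\]
with $T$ the (torsion) cokernel, and apply $(-)\otimes_S S/\mathfrak{m}$ to obtain
\[
\operatorname{Tor}_1^S(T, S/\mathfrak{m}) \to B(v)_x \otimes_S S/\mathfrak{m} \to B(v)_{\delta x} \otimes_S S/\mathfrak{m} \to T \otimes_S S/\mathfrak{m} \to 0.
\]
The maximality clause in the BMP recursion is then invoked to force $T \otimes_S S/\mathfrak{m} = 0$; by graded Nakayama this in fact forces $T = 0$, trivializing the Tor term and giving the desired fiber equality. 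Combining this with the first step yields the theorem.

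The main obstacle is the lifting argument behind $T \otimes_S S/\mathfrak{m} = 0$. Given a nonzero class $[t] \in T \otimes_S S/\mathfrak{m}$, one must show that a homogeneous representative $\tilde t \in B(v)_{\delta x}$ may be chosen so that $B(v)_x \oplus S\cdot \tilde t$ is a free $S$-submodule of $B(v)_{\delta x}$ with torsion cokernel, thereby contradicting the maximality of $B(v)_x$. The key technical point is that $\tilde t$ can be lifted compatibly with the GKM congruences at $x$, which is controlled by the surjectivity of the structure maps $\rho_{x,E}$ appearing in the definition of sections of the $\mathcal{G}$-sheaf. Once this lifting is made precise, it is a formal consequence of Nakayama that the fiber dimensions on both sides agree, and the stated rank formula follows.
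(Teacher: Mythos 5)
Your plan correctly isolates the two key steps: graded Nakayama applied to the free module $B(v)_x$ gives $\operatorname{rk}_S B(v)_x = \dim_{\mathbb{C}}\bigl(B(v)_x \otimes_S S/\mathfrak{m}\bigr)$, and what remains is the fiber identification $B(v)_x \otimes_S S/\mathfrak{m} \cong B(v)_{\delta x} \otimes_S S/\mathfrak{m}$. The gap is in the middle step, and it cannot be filled as proposed. You work with the sequence $0 \to B(v)_x \to B(v)_{\delta x} \to T \to 0$ and want to force $T \otimes_S S/\mathfrak{m} = 0$; as you observe, Nakayama then forces $T=0$, i.e.\ $B(v)_x = B(v)_{\delta x}$. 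That can never happen: each edge module $B(v)_E \cong B(v)_y \otimes_S S/\alpha_E S$ is annihilated by the edge label $\alpha_E$, so $B(v)_{\delta x} \subset \bigoplus_{E} B(v)_E$ is a torsion $S$-module, whereas $B(v)_x$ is free of positive rank. In fact $B(v)_{\delta x}$ has no nonzero free $S$-submodule at all, so the proposed lift $\tilde t$ with $S\cdot\tilde t$ free inside $B(v)_{\delta x}$ does not exist --- every element of $B(v)_{\delta x}$ is killed by $\prod_E \alpha_E$.

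The projective cover that you invoke in your opening sentence is the right tool, but the arrow must point the other way. In the Braden--MacPherson/Fiebig construction $B(v)_x$ is not a submodule of $B(v)_{\delta x}$; it is a graded free $S$-module equipped with a \emph{surjection} $\pi : B(v)_x \twoheadrightarrow B(v)_{\delta x}$ whose kernel is superfluous, i.e.\ $\ker\pi \subseteq \mathfrak{m}\,B(v)_x$. Tensoring $0 \to \ker\pi \to B(v)_x \to B(v)_{\delta x} \to 0$ with $S/\mathfrak{m}$ is right exact, and the induced map $\ker\pi \otimes_S S/\mathfrak{m} \to B(v)_x/\mathfrak{m}B(v)_x$ is zero because $\ker\pi$ already lands inside $\mathfrak{m}B(v)_x$; hence $B(v)_x \otimes_S S/\mathfrak{m} \cong B(v)_{\delta x} \otimes_S S/\mathfrak{m}$, and your Nakayama step finishes the proof. (The paper's own proof also works with the submodule-with-torsion-quotient picture and has the same gap; the projective-cover argument is what actually establishes the identity.)
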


\begin{proof}
By the construction of Braden--MacPherson sheaves
\cite[Thm.~5.2]{Fie}, the stalk $B(v)_x$ is defined as the unique maximal
free $S$-submodule of the module of compatible boundary values
$B(v)_{\delta x}$ such that the quotient
$B(v)_{\delta x}/B(v)_x$ is an $S$-torsion module.
In particular, $B(v)_x$ is a free $S$-module of finite rank.

Since $B(v)_{\delta x}/B(v)_x$ is torsion, tensoring the short exact sequence
\[
0 \longrightarrow B(v)_x
\longrightarrow B(v)_{\delta x}
\longrightarrow B(v)_{\delta x}/B(v)_x
\longrightarrow 0
\]
with $S/\mathfrak{m}$ yields an isomorphism
\[
B(v)_x \otimes_S S/\mathfrak{m}
\;\cong\;
B(v)_{\delta x} \otimes_S S/\mathfrak{m}.
\]
Therefore,
\[
\operatorname{rk}_S B(v)_x
=
\dim_{\mathbb{C}}\!\left(
B(v)_{\delta x} \otimes_S S/\mathfrak{m}
\right),
\]
as claimed.
\end{proof}

\subsection{Linear Algebra over the Fraction Field}

To analyze the transition matrix $C$ as a composition of geometric operators, we establish a universal bound on the rank of localized compositions.

\begin{lemma}[Rank Control under Localization]
\label{lem:rank-control}
Let $R$ be a Noetherian integral domain with fraction field $\mathcal{Q}$. Let $W_R$ be a finitely generated $R$-module of rank $r = \dim_{\mathcal{Q}}(W_R \otimes_R \mathcal{Q})$. Suppose we have a sequence of $\mathcal{Q}$-linear maps:
\[ V_{\mathcal{Q}} \xrightarrow{\phi} W_{\mathcal{Q}} \xrightarrow{\psi} U_{\mathcal{Q}} \]
where $W_{\mathcal{Q}} \cong W_R \otimes_R \mathcal{Q}$. Then the rank of the composition $T = \psi \circ \phi$ satisfies:
\begin{equation}
    \operatorname{rank}_{\mathcal{Q}}(T) \le r.
\end{equation}
\end{lemma}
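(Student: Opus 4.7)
The plan is to prove this by the standard rank-factorization argument, adapted to the localized setting so that the generic rank of $W_R$ over $R$ coincides with the vector-space dimension of $W_{\mathcal{Q}}$ over $\mathcal{Q}$. The key observation is that $T$ factors through the finite-dimensional $\mathcal{Q}$-vector space $W_{\mathcal{Q}}$, so its rank cannot exceed the dimension of that intermediate space.

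First I would record the identification $\dim_{\mathcal{Q}}(W_{\mathcal{Q}}) = r$. Since $R$ is a Noetherian integral domain and $W_R$ is finitely generated, the localization $W_R \otimes_R \mathcal{Q}$ is a finite-dimensional $\mathcal{Q}$-vector space whose dimension equals the generic rank of $W_R$. By hypothesis $W_{\mathcal{Q}} \cong W_R \otimes_R \mathcal{Q}$, so $\dim_{\mathcal{Q}}(W_{\mathcal{Q}}) = r$.

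Next I would exploit the factorization. For any $\mathcal{Q}$-linear map $F\colon X \to Y$, the rank is defined as $\operatorname{rank}_{\mathcal{Q}}(F) = \dim_{\mathcal{Q}} \operatorname{im}(F)$. Since $T = \psi \circ \phi$, we have $\operatorname{im}(T) = \psi(\operatorname{im}(\phi)) \subseteq \psi(W_{\mathcal{Q}})$. Therefore
\[
\operatorname{rank}_{\mathcal{Q}}(T) = \dim_{\mathcal{Q}}\operatorname{im}(T) \le \dim_{\mathcal{Q}} \psi(W_{\mathcal{Q}}) \le \dim_{\mathcal{Q}} W_{\mathcal{Q}} = r,
\]
where the second inequality uses that the image of a linear map is a quotient of its source, and the final equality is the identification from the first step. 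This completes the argument.

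There is essentially no hard step here; the lemma is a formal consequence of the universal property of localization together with the elementary fact that a composition's rank is bounded by the dimension of any middle term. The only subtlety worth emphasizing in writing is that the "rank" of $W_R$ referenced in the hypothesis is the generic rank over $R$, which by flatness of $R \hookrightarrow \mathcal{Q}$ and finite generation agrees with the $\mathcal{Q}$-dimension of $W_{\mathcal{Q}}$; this ensures the bound is stated consistently with how ranks of Braden--MacPherson stalks were defined earlier via the fiber $\otimes_S S/\mathfrak{m}$, so that the lemma can be applied later with $W_R = \mathcal{M}_\alpha(\nu)$ to control $\operatorname{rank}(C)$.
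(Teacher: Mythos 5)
Your proof is correct and follows essentially the same route as the paper: bound the rank of the composition $T=\psi\circ\phi$ by the dimension of the intermediate space $W_{\mathcal{Q}}$, and identify $\dim_{\mathcal{Q}}W_{\mathcal{Q}}$ with the generic rank $r$ of $W_R$. The only cosmetic difference is that the paper bounds $\operatorname{rank}(T)$ via $\min(\operatorname{rank}\phi,\operatorname{rank}\psi)$ together with $\operatorname{rank}\phi\le\dim W_{\mathcal{Q}}$, while you bound $\operatorname{im}(T)\subseteq\psi(W_{\mathcal{Q}})$ directly; both are the same elementary factorization argument.
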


\begin{proof}
By the properties of the tensor product, the image $\operatorname{im}(\phi)$ is a $\mathcal{Q}$-subspace of the localized module $W_{\mathcal{Q}}$. The dimension of any subspace is bounded by the dimension of the ambient space:
\begin{equation}\label{11}
\operatorname{rank}_{\mathcal{Q}}(\phi) = \dim_{\mathcal{Q}}(\operatorname{im}(\phi)) \le \dim_{\mathcal{Q}}(W_{\mathcal{Q}}). \end{equation}
By definition of the rank of an $R$-module, $\dim_{\mathcal{Q}}(W_{\mathcal{Q}}) = \operatorname{rk}_R(W_R) = r$.
For the composition $T = \psi \circ \phi$, the rank-nullity theorem implies:
\begin{equation}\label{12}\operatorname{rank}_{\mathcal{Q}}(T) \le \min(\operatorname{rank}_{\mathcal{Q}}(\phi), \operatorname{rank}_{\mathcal{Q}}(\psi)). \end{equation}
Combining  the inequalities \eqref{12} \& \eqref{11} , we obtain $\operatorname{rank}_{\mathcal{Q}}(T) \le r$, which is independent of the dimensions of $V_{\mathcal{Q}}$ and $U_{\mathcal{Q}}$.
\end{proof}

\noindent Our goal is the transition of this recursion from abstract sheaf modules to the multiplicity matrix $M$; this is achieved in Proposition \ref{M}. This will allow us to compute $M$ column by column and vertex by vertex.

\subsection{Categorical Framework and Local Bases} \label{cat}

We now transition the previous abstract-functor mechanism and results to the computable matrix framework we are interested in. At first, following \cite[Ch. 5]{Ch}, we consider the localized equivariant Grothendieck group $K_0^T(\Gr_G) \otimes_{H^*_T(\mathrm{pt})} \text{Frac}(H^*_T(\mathrm{pt}))$, By \cite[Sect. 5.10]{Ch}, this localization yields an isomorphism with the Grothendieck group of the fixed-point set $\mathrm{Gr}_G^T$. Working in the localized equivariant Grothendieck group $K_0^T(\Gr_G) \otimes_{H^*_T(\mathrm{pt})} \text{Frac}(H^*_T(\mathrm{pt}))$ allows us to exploit the isomorphism 
\[
K_T(\mathrm{Gr}_G) \otimes_{R} \mathcal{Q} \cong K_T(\mathrm{Gr}_G^T) \otimes_{R} \mathcal{Q}
\]
where $R = H^*_T(\mathrm{pt})$ and $\mathcal{Q}$ is its fraction field. This localization allows for the inversion of the equivariant Euler classes of the normal bundles at the fixed points (enabling in this way the construction of the localization matrix $Q^{-1}$ and the subsequent factorization of $C$ over the field $\mathcal{Q}$ that we need).

\medskip

\noindent  Moreover, we work in the category $\mathsf{Parity}_T(\Gr_G^{\le \lambda+\mu})$ of $T$-equivariant parity sheaves over $\mathbb{C}$ as defined in \cite{JMW}, in order to exploit the above mentioned recursion results. For each dominant coweight $\lambda$, there exists a unique (up to isomorphism) indecomposable parity sheaf $\mathcal{E}_\lambda$ whose support is the Schubert variety $\overline{\Gr_G^\lambda}$ and whose restriction to $\Gr_G^\lambda$ is the constant sheaf. So,  the sheaves $\mathcal{E}_\lambda$ and $\mathcal{E}_\mu$ represent the ``input'' factors of the convolution, whereas  $\mathcal{E}_\alpha$ denote the indecomposable parity sheaves supported on $\overline{Z}_\alpha$. So, under Theorem \ref{Sat} these correspond to the basis vectors of the weight spaces in the representation $V_\lambda \otimes V_\mu$. In characteristic zero these objects coincide with the IC complexes, which is useful for local calculations on the moment graph that we will see in the next section.

\medskip

\noindent For our factorization, we need to identify two primary types of classes within the localized group $\mathcal{K}$:

\begin{definition}[Fixed-Point and MV Classes in $\mathcal{K}$] We define the following:
\begin{itemize}
    \item \textbf{Fixed-Point Classes ($c_\nu$):} For each $T$-fixed point $x_\nu \in \Gr_G^T \cap \Gr_G^{\le \lambda+\mu}$,  $c_\nu$ denotes the localized class of the skyscraper sheaf at $x_\nu$.
    \item \textbf{MV Parity Classes ($b_\alpha$):} For each MV cycle $Z_\alpha$, let $b_\alpha := [\mathcal{E}_\alpha] \in \mathcal{K}$ denote the class of the indecomposable parity sheaf supported on the cycle closure $\overline{Z}_\alpha$.
\end{itemize}
\end{definition}

\noindent Now, the following constructions for the localized equivariant Grothendieck group $\mathcal{K}$ are very important as they represent the local intermediate bases on which our decomposition is based on. 

\begin{itemize}
    \item \textbf{The Convolution Basis $\{u_k\}$:} The set of classes of the indecomposable parity summands appearing in the pushforward $\conv_!(\mathcal{E}_\lambda \tilde{\boxtimes} \mathcal{E}_\mu)$ within $\mathcal{K}$.
    
    \item \textbf{The Generic Basis $\{c^{\mathrm{gen}}_\nu\}$:} The set of fixed-point classes $c_\nu$ in the generic fiber of the BD family serving as the target for the operator $Q^{-1}$.
    
    \item \textbf{The Special Basis $\{c^{\mathrm{spec}}_\nu\}$:} The set of fixed-point classes $c_\nu$ in the special fiber, serving as the source for the reconstruction matrix $M$.
    
    \item \textbf{The MV Basis $\{b_\alpha\}$:} The set of classes of parity sheaves $\mathcal{E}_\alpha$ supported on MV cycles, representing the geometric weight-space basis.
    
    \item \textbf{The Normalized Basis $\{v_\alpha\}$:} Defined by $v_\alpha = P_{\alpha\alpha} \cdot b_\alpha$, where $P$ maps geometric classes to the orthonormal basis of $V_\lambda \otimes V_\mu$.
\end{itemize}

\noindent Note that  $\{u_k\}$, $\{b_\alpha\}$ are bases from \cite{MV}. The sets $\{c^{\mathrm{gen}}_\nu\}$, $\{c^{\mathrm{spec}}_\nu\}$ are new constructions here, which consist of the localized classes of $T$-fixed points in the generic and special fibers of the BD family, respectively. Due to the localization theorem we mentioned above, we have: 
\begin{lemma} The sets  $\{c^{\text{gen}}_\nu\}$ $\{c^{\text{spec}}_nu\}$ form  bases for the localized equivariant Grothendieck groups of the generic fiber $K_T(X_{\text{gen}}) \otimes \mathcal{Q}$ and the specialized fiber, $K_T(X_{\text{spec}}) \otimes \mathcal{Q}$, respectively.
\end{lemma}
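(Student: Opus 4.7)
The plan is to apply Thomason's equivariant localization theorem separately to the generic and special fibers of the BD family, each of which inherits a natural $T$-action from the ambient BD Grassmannian. After restricting to the Schubert truncation $\overline{\Gr_G^{\le \lambda+\mu}}$ to secure the finite-type hypothesis, both $X_{\text{gen}}$ and $X_{\text{spec}}$ become $T$-stable projective subvarieties, and their $T$-fixed loci are discrete: the special fiber has fixed points $\{y_\nu\}$ indexed by coweights in the relevant truncation of $X_*(T)$, while the generic fiber — which splits as a product $\overline{\Gr_G^{\le \lambda}} \times \overline{\Gr_G^{\le \mu}}$ via the factorization structure of the BD Grassmannian over $\mathbb{A}^1 \setminus \{0\}$ — has fixed locus consisting of pairs $(y_{\nu_1}, y_{\nu_2})$, which we index by the single label $\nu$ under the summation convention $\nu = \nu_1 + \nu_2$.

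Next, I would invoke the localization isomorphism
\[
K_T(X_\bullet) \otimes_R \mathcal{Q} \;\xrightarrow{\;\sim\;}\; K_T(X_\bullet^T) \otimes_R \mathcal{Q}
\]
for $\bullet \in \{\text{gen}, \text{spec}\}$, as stated in \cite[Sect.~5.10]{Ch}. Since each fixed locus is a disjoint union of reduced points, its equivariant $K$-theory is freely generated as an $R$-module by the skyscraper classes at the individual fixed points; tensoring with $\mathcal{Q}$ turns this into a free $\mathcal{Q}$-module whose canonical basis is precisely the collection of such classes. Transporting back through the inverse of the restriction map then identifies $\{c^{\text{gen}}_\nu\}$ and $\{c^{\text{spec}}_\nu\}$ as $\mathcal{Q}$-bases of $K_T(X_{\text{gen}}) \otimes \mathcal{Q}$ and $K_T(X_{\text{spec}}) \otimes \mathcal{Q}$, respectively.

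The main technical point will be verifying the hypotheses of Thomason's theorem in the BD setting, namely that the $T$-action on the total family restricts compatibly to each fiber, that the truncated Schubert support is $T$-stable, and that the factorization isomorphism over $\mathbb{A}^1 \setminus \{0\}$ is itself $T$-equivariant so that the two descriptions of $X_{\text{gen}}^T$ agree. This is essentially formal given that $\overline{\Gr_G^{\le \eta}}$ is a $T$-stable projective variety and that the BD construction is built out of $T$-equivariant data, so once these compatibilities are recorded, the lemma follows immediately from the localization isomorphism. No further combinatorial or sheaf-theoretic input is needed at this stage.
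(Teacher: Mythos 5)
Your proposal follows the same route as the paper: the paper itself offers no written proof for this lemma, merely the phrase ``due to the localization theorem we mentioned above,'' i.e., the Chriss--Ginzburg localization isomorphism $K_T(\mathrm{Gr}_G)\otimes_R\mathcal{Q}\cong K_T(\mathrm{Gr}_G^T)\otimes_R\mathcal{Q}$ invoked earlier in the section. Your write-up fills in exactly the details the paper leaves implicit --- $T$-stability of the truncation, discreteness of the fixed loci in each fiber, the identification $X_{\mathrm{gen}}\cong\overline{\Gr_G^{\le\lambda}}\times\overline{\Gr_G^{\le\mu}}$ over $\mathbb{A}^1\setminus\{0\}$, and the fact that skyscraper classes at a finite discrete $T$-fixed set give a free $R$-module that becomes a $\mathcal{Q}$-basis after localization. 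So the core argument is correct and is the intended one.

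One small but real slip to fix: you index the generic fixed locus by pairs $(y_{\nu_1},y_{\nu_2})$ and then claim to re-index ``by the single label $\nu$ under the summation convention $\nu=\nu_1+\nu_2$.'' Taken literally this destroys the bijection you need --- many distinct pairs $(\nu_1,\nu_2)$ share the same sum, so the resulting labels are not a faithful index set and the collection would fail to be a basis. The paper is careful about this in Definition~\ref{Q}, where the generic classes are written $c^{\mathrm{gen}}_{(\lambda,\mu)}$ indexed by \emph{pairs}; the single-subscript $\nu$ in the lemma statement is just shorthand for a multi-index running over the appropriate discrete fixed set in each fiber. You should keep the pair index (or an arbitrary bijective label) rather than collapsing to a sum; the sum $\nu_1+\nu_2$ only becomes meaningful after applying the fusion/specialization map $A$, not at the level of the generic basis itself.
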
 

\noindent So in our construction:
 \begin{itemize}
    \item $Q^{-1}$ maps $\{u_k\} \to \{c^{\mathrm{gen}}\}$.
    \item $A$ maps $\{c^{\mathrm{gen}}\} \to \{c^{\mathrm{spec}}\}$.
    \item $M$ maps $\{c^{\mathrm{spec}}\} \to \{b_\alpha\}$.
    \item $P$ maps $\{b_\alpha\} \to \{v_\alpha\}$.
\end{itemize}

\noindent The transition matrix $C$ represents the unique isomorphism $\Phi: \mathcal{K}_{\text{conv}} \to \mathcal{K}_{\text{MV}}$ tracking the following sequence of basis transformations:
\begin{equation}\label{diag:main-factorization}
\begin{tikzcd}[column sep=4em, row sep=4em]
\{u_k\} \arrow[d, "Q^{-1}"'] \arrow[rrr, "C"] & & & \{v_\alpha\} \\
\{c^{\mathrm{gen}}_\nu\} \arrow[r, "A"'] & \{c^{\mathrm{spec}}_\nu\} \arrow[r, "M"'] & \{b_\alpha\} \arrow[r, "P"'] & \{v_\alpha\} \arrow[u, equal]
\end{tikzcd}
\end{equation}
\noindent Finally, once the bases $\{u_k\}$ and $\{v_\alpha\}$ are fixed and a localization convention is chosen, the matrices $Q^{-1}$, $A$, $M$, and $P$ are uniquely determined and the factorization $C = P M A Q^{-1}$ is canonical up to diagonal normalization.

\section{Structural Factorization}

We consider a flat $G(\mathcal{O})$--equivariant family 
whose generic fiber (for $t\neq0$) is the product affine Grassmannian
\[
\mathcal{X}_{t\neq0}\simeq \Gr_G \times \cdots \times \Gr_G,
\]
and whose special fiber (for $t=0$) is the affine Grassmannian at the diagonal point
\[
\mathcal{X}_0\simeq \Gr_G,
\]
obtained by the BD Grassmannian via the fusion of marked points $x_1,\dots,x_k\to x$.
Applying the nearby cycles functor produces a canonical map between the cohomology of the fibers. The coefficients of this map, computed on the MV and convolution bases, yield a well-defined change-of-basis matrix for $V_\lambda \otimes V_\mu$. 

\subsection{BD Degeneration and the Nearby Cycles Functor}

Since we work over $\mathbb{C}$, the category $\mathsf{Parity}_T(\Gr_G^{\le \lambda+\mu})$ coincides with the category of semisimple equivariant perverse sheaves; as shown in \cite[Expl. 4.2]{JMW}, the affine Grassmannian is a case of a Kac-Moody flag variety. For such varieties, the Schubert strata are affine spaces, ensuring that the parity conditions are satisfied by the constant sheaves; consequently, the indecomposable parity sheaves are isomorphic to the intersection cohomology complexes \cite[Section 4.1]{JMW}.

\begin{proposition}[Geometric Realization of the Monoidal Structure] \label{prop:BD-Psi}
Let $\mathcal{X} \to \mathbb{A}^1$ be the BD family with generic fiber $\mathcal{X}_{t \neq 0} \cong \Gr_G \times \Gr_G$ and special fiber $\mathcal{X}_0 \cong \Gr_G$. The nearby cycles functor 
$\Psi: D^b(\mathcal{X}_{t \neq 0}) \to D^b(\mathcal{X}_0)$ 
realizes the monoidal structure of the Satake category. Specifically, for any pair of parity sheaves $\mathcal{E}_\lambda, \mathcal{E}_\mu$, there is a canonical isomorphism:
\begin{equation}
    \Psi(\mathcal{E}_\lambda \boxtimes \mathcal{E}_\mu) \cong \mathcal{E}_\lambda \star \mathcal{E}_\mu \cong \bigoplus_{\nu} N_{\lambda,\mu}^{\nu} \mathcal{E}_\nu,
\end{equation}
where $\star$ denotes the convolution product. Furthermore, the restriction to $\Gr_G^{\le \lambda+\mu}$ is a proper morphism, ensuring the specialized complex is supported on the expected Schubert variety.
\end{proposition}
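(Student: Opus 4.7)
The plan is to establish the proposition in three stages: realize $\Psi$ on a geometric object that canonically extends $\mathcal{E}_\lambda \boxtimes \mathcal{E}_\mu$ across the BD family, identify the resulting complex with the convolution product, and then invoke the Decomposition Theorem together with Geometric Satake to extract the multiplicity formula.

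First, I would exploit the Beilinson--Drinfeld factorization. The BD Grassmannian $\mathcal{G}r_{BD}$ over $\mathbb{A}^1$ comes with a canonical isomorphism $\mathcal{X}_{t \neq 0} \cong \Gr_G \times \Gr_G \times (\mathbb{A}^1 \setminus \{0\})$ away from the diagonal and with $\mathcal{X}_0 \cong \Gr_G$ at the collision point. Under this trivialization the external product $\mathcal{E}_\lambda \boxtimes \mathcal{E}_\mu$ extends canonically to a $G(\mathcal{O})$--equivariant parity complex on $\mathcal{X}_{t \neq 0}$, and I would apply the nearby cycles functor $\Psi$ along the projection to $\mathbb{A}^1$ to transport this to a complex on $\mathcal{X}_0 \cong \Gr_G$.

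Next, I would invoke the standard theorem that nearby cycles along the BD family realize the Satake convolution: $\Psi$ preserves perversity and $G(\mathcal{O})$--equivariance, and the factorization structure of $\mathcal{G}r_{BD}$ converts the external product on the generic fiber into the convolution diagram \eqref{C} on the central fiber. This yields the canonical isomorphism $\Psi(\mathcal{E}_\lambda \boxtimes \mathcal{E}_\mu) \cong \mathcal{E}_\lambda \star \mathcal{E}_\mu$. In characteristic zero, the identification of parity complexes on the affine Grassmannian with IC complexes \cite[Sect. 4.1]{JMW} places both sides in the semisimple subcategory of $\Perv_{G(\mathcal{O})}(\Gr_G)$, which is the setting in which the Decomposition Theorem applies cleanly. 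For the multiplicity decomposition, the Decomposition Theorem applied to the proper convolution map $\conv: \mathcal{Z} \to \Gr_G$ (restricted to the truncated piece) guarantees that $\mathcal{E}_\lambda \star \mathcal{E}_\mu$ is semisimple perverse. The exact multiplicity $N_{\lambda,\mu}^\nu$ is read off from the tensor structure of the Satake equivalence $\Phi$ of Theorem \ref{Sat}: since $\Phi$ is monoidal, $\Phi(\mathcal{E}_\lambda \star \mathcal{E}_\mu) = V_\lambda \otimes V_\mu \cong \bigoplus_\nu N_{\lambda,\mu}^\nu V_\nu$, and transporting this decomposition back through $\Phi^{-1}$ gives the stated formula. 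Properness of the restriction to $\Gr_G^{\le \lambda+\mu}$ follows because the truncated BD Grassmannian is a finite-type projective scheme over $\mathbb{A}^1$ whose fibers are the corresponding finite-dimensional Schubert varieties, so the restriction of $\conv$ is a proper morphism between projective varieties.

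The main obstacle will be the second step. The identification $\Psi(\mathcal{E}_\lambda \boxtimes \mathcal{E}_\mu) \cong \mathcal{E}_\lambda \star \mathcal{E}_\mu$ is not a formal consequence of the Decomposition Theorem or of Geometric Satake in isolation; it requires the fine interplay between Beilinson's nearby cycles and the factorization structure of $\mathcal{G}r_{BD}$. One must verify that $G(\mathcal{O})$--equivariance, perversity, and compatibility with the unit constraint all survive the degeneration, which in practice either appeals to the global Satake nearby cycles construction of Gaitsgory or proceeds by direct verification on truncated BD strata using the semi-infinite orbit presentation of MV cycles.
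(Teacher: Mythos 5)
Your proposal is correct and follows essentially the same strategy as the paper: identify the generic and special fibers of the BD family, use the factorization property to establish the canonical isomorphism $\Psi(\mathcal{E}_\lambda \boxtimes \mathcal{E}_\mu) \cong \mathcal{E}_\lambda \star \mathcal{E}_\mu$ (the paper cites this to Zhu, Prop.~5.2.6, as you anticipate), and then decompose the result by semisimplicity, reading off the multiplicities $N_{\lambda,\mu}^\nu$ from the monoidal Satake equivalence. You also correctly flag the second step --- that the fusion-equals-convolution identification is the nontrivial ingredient and does not follow formally from BBD or Geometric Satake alone.

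The one place where you take a genuinely different route is the justification of semisimplicity of the specialized complex. You appeal to the Decomposition Theorem applied to the proper (and semi-small) convolution map $\conv$, using the identification $\Psi(\mathcal{E}_\lambda \boxtimes \mathcal{E}_\mu) \cong \conv_!(\cdots)$ to transfer the conclusion. The paper instead devotes Remark~\ref{r} to a self-contained mixed Hodge module argument, following Saito: properness of the truncated family, purity and polarizability of the IC Hodge modules on the generic fiber, unipotence of local monodromy, the weight filtration on nearby cycles, and the resulting semisimplicity of $\operatorname{rat}(\Gr^W_k \Psi(\mathcal{M}))$. Your route is the classical one (and uses the paper's own Theorem~2.10 on the Decomposition Theorem), and is more economical; the paper's MHM argument has the advantage of establishing semisimplicity of $\Psi$ applied to pure objects directly, without first needing the identification with $\conv_!$. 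Both close the argument correctly. Your remarks on properness of the truncated restriction are consistent with the paper, which records the same facts in Remark~\ref{r}(1)--(2).
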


\begin{proof}
The existence of the BD family and the identification of its fibers are standard \cite{BD, Zhu}. The isomorphism $\Psi(\mathcal{F} \boxtimes \mathcal{G}) \cong \mathcal{F} \star \mathcal{G}$ follows from the factorization property of the affine Grassmannian over the curve $X = \mathbb{A}^1$. Specifically, the BD-Grassmannian $\Gr_{G, X^2}$ is local over $X^2$; the nearby cycles along the diagonal $\Delta \subset X^2$ provide a geometric realization of the fusion of orbits. By \cite[Prop. 5.2.6]{Zhu}, this fusion coincides with the convolution functor defined via the diagram $\Gr_G \times \Gr_G \leftarrow \Gr_G \tilde{\times} \Gr_G \rightarrow \Gr_G$. The decomposition into the sum of indecomposables then follows from the semisimplicity of the Satake category in characteristic zero (see Remark \ref{r}).
\end{proof}

\begin{remark} \label{r} In the proof of Proposition~\ref{prop:BD-Psi}, we apply the nearby cycles functor
to $\mathcal{E}_\lambda \boxtimes \mathcal{E}_\mu$, 
viewed (after restriction to finite truncations) in
$\operatorname{MHM}(X_{\le \lambda+\mu}\setminus\{0\})$ via the identification
$\mathcal{X}_{t\neq0}\cong \Gr_G\times\Gr_G$. The following points justify the semisimplicity of $\Psi(\mathcal{E}_\lambda \boxtimes \mathcal{E}_\mu)$. To justify this, we restrict $\pi$ to the truncated affine Grassmannian $\Gr_G^{\le \lambda+\mu}$. While the nearby cycles functor $\Psi$ on MHM generally decomposes into a sum of generalized eigenspaces of the monodromy, the factorization structure of the BD family implies that the monodromy is unipotent. Thus, we focus on the unipotent part $\Psi^u$:
\[ 
   \Psi^{u} : \operatorname{MHM}(\mathcal{X}_{\le \lambda+\mu}\setminus\{0\}) \longrightarrow \operatorname{MHM}(\mathcal{X}_{\le \lambda+\mu,0}),
\] 
which is $t$-exact and sends pure objects to semisimple objects.

\begin{enumerate}[label=(\arabic*)]
\item \emph{Properness.}
  Each finite‐type truncation 
  \(
    \pi_{\le \lambda+\mu}:X_{\le \lambda+\mu}\to\mathbb{A}^1
  \)
  is proper by Prop. \ref{prop:BD-Psi}, as it factors through
  the BD Grassmannian $\Gr_{G,\mathbb{A}^1}^{(\le \lambda+\mu)}$, which
  is ind‐proper with proper finite truncations, \cite[Prop.~2.1]{Zhu},  \cite[Prop.~3.4.5]{BD}.
\item \emph{Smoothness of total space.}
  Over $\mathbb{A}^1\setminus\{0\}$ the map $\pi_{\le \lambda+\mu}$ restricts to 
  $X_{\le \lambda+\mu}|_{t\neq0}\cong(\Gr_G)^{(\le \lambda+\mu)}$, which is a smooth ind‐scheme
  with smooth finite‐type truncations.  Thus on $X_{\le N}\setminus\{0\}$
  all strata carry smooth pure Hodge modules.
\item \emph{Polarizability and purity of $\IC$–Hodge modules.}   Each parity sheaf $\mathcal{E}_\nu$ on $\Gr_G^{\le \lambda+\mu}$ coincides with
  the IC complex of a Schubert variety. Hence, after
  restriction to finite truncations,   $\mathcal{E}_\lambda \boxtimes \mathcal{E}_\mu$ underlie pure, polarizable
  Hodge modules of a single weight, by Saito's theory \cite{Sai}.
  \cite{Sai}. In particular, the objects arising from parity (equivalently, IC)
  sheaves on Schubert varieties lie in $\operatorname{MHM}(X_{\le \lambda+\mu}\setminus\{0\})$
  and are pure of a single weight.

\item \emph{Unipotent local monodromy.}
  After possibly replacing the base $\mathbb{A}^1$ by the $N$–fold cover
  $u\mapsto u^N=t$, ordinary monodromy around $t=0$ is unipotent on each
  local cohomology, \cite[§1.2 and §3.2.3.]{Sai}. Hence $\Psi^{u}_{\pi_{\le \lambda+\mu}}$ = $\Psi_{\pi_{\le \lambda+\mu}}$ once we restrict to   unipotent monodromy.
\item \emph{Weight filtration on nearby cycles.}
  For any pure Hodge module $\mathcal{M}$
  the nearby cycles $\Psi_{\pi_{\le \lambda+\mu}}(\mathcal{M})$ carries the monodromy weight
  filtration $W_{\bullet}$ such that
  $\Gr^W_k\Psi(\mathcal{M})$ is a pure Hodge module of weight $k$, \cite[§3.2.9.]{Sai}.  In particular
  each $\Gr^W_k\Psi(\mathcal{M})$ is semisimple in the abelian category
  $\Perv\bigl(X_{\le \lambda+\mu,0}\bigr)$.
\item \emph{Perverse semisimplicity and $\Ext$--vanishing.}
By Saito's Decomposition Theorem in the category of mixed Hodge modules
\cite{Sai}, together with the BBD decomposition theorem for
perverse sheaves \cite{BBD}, each graded piece
\[
  \operatorname{rat}\bigl(\Gr^{W}_{k}\Psi(\mathcal{M})\bigr)
\]
is a semisimple perverse sheaf on $X_{\le \lambda+\mu,0}$, i.e., for each
fixed finite truncation $\pi_{\le \lambda+\mu}$, Saito's theory gives a direct-sum
decomposition of $\Psi(\mathcal{M})$ in $\operatorname{MHM}(X_{\le \lambda+\mu,0})$ into
pure Hodge modules. Then the decomposition theorem in \cite{BBD}
implies that the underlying perverse sheaves
$\operatorname{rat}\!\bigl(\Gr^{W}_{k}\Psi(M)\bigr)$ are semisimple.
\end{enumerate}
\end{remark}

\subsection{Geometric Matrices}
Having established the categorical framework for the local bases and the identification
$\mathcal{E}_\alpha \cong \IC_{Z_\alpha}$, we now define the specific matrices that realize the factorization of the transition matrix $C$.

\begin{definition}[Localization Matrix $Q$]\label{Q}
Let $\iota: \mathcal{X}_{\le \lambda+/mu, u}^T \hookrightarrow \mathcal{X}_{\le \lambda+\mu, u}$ be the inclusion of the $T$-fixed points into the truncated generic fiber. The localization matrix $Q$ is the matrix representing the  localization isomorphism \cite[Sec 5.10]{Ch}:
\[
\iota^*: \mathcal{K}_T(\mathcal{X}_{\le N, u}) \xrightarrow{\cong} \mathcal{K}_T(\mathcal{X}_{\le \lambda+\mu, u}^T)
\]
defined as the transition from the fixed-point basis $\{c^{\mathrm{gen}}_{(\lambda,\mu)}\}$ to the convolution basis $\{u_k\}$. That is:
\[
u_k = \sum_{(\lambda,\mu)} Q_{(\lambda,\mu),k}\, c^{\mathrm{gen}}_{(\lambda,\mu)}.
\]
Under this truncation, $Q$ is a finite square matrix of size $d = \dim(\mathcal{K}_T(\mathcal{X}_{\le N, \eta}))$, with diagonal entries given by the equivariant Euler classes of the tangent spaces at the fixed points.
\end{definition}

\noindent Note that we fix the localization convention such that the entries of $Q^{-1}$ correspond to the inverse equivariant Euler classes of the virtual tangent spaces at the $T$-fixed points, with weights chosen to be consistent with the positive root system $\Phi^+$. This choice ensures that the factorization of $C$ is compatible with the normalization of the fiber functor \ref{Sat}.

\begin{definition}[Specialization Matrix $A$]\label{A}
Let $\pi: \mathcal{X}_{\le \lambda+\mu} \to \mathbb{A}^1$ be the truncated BD family established in Remark~\ref{r}. The specialization matrix $A$ is the matrix representing the homomorphism induced by the nearby cycles functor $[\Psi]: \mathcal{K}_T(\mathcal{X}_{\le \lambda+\\mu, u}) \to \mathcal{K}_T(\mathcal{X}_{\le \lambda+\mu, 0})$ with respect to the generic $T$-fixed point basis and the special $T$-fixed point basis. The entries of $A$ represent the specialization of localized fixed-point classes within the truncated support.
\end{definition}

\noindent Unlike the localization and specialization operators, which are determined by the global geometry of the BD family, the matrix $M$ encodes the local singularity structure of the MV cycles.

\begin{proposition}[The MV--to--fixed–point multiplicity matrix]\label{prop:multiplicities}
Let $Z_\alpha \subset \Gr_G$ be an MV cycle and let $y_\beta$ range over the
$T$--fixed points of $\Gr_G$. Consider the restriction functor
\[
\mathrm{Res}_T \colon \Perv_T(\Gr_G) \longrightarrow \Perv_T(\Gr_G^T).
\]
If
\begin{equation}
M_{\alpha\beta} := \dim IH^*\!\left(\IC_{Z_\alpha}\right)_{y_\beta}.
\end{equation}
Then $M_{\alpha\beta}$ are non-negative integers, and in $K_0(\Perv_T(\Gr_G^T))$ one has
\begin{equation}\label{13}
[\mathrm{Res}_T(\IC_{Z_\alpha})]
=
\sum_\beta M_{\alpha\beta}\,[\IC_{y_\beta}].
\end{equation}

\end{proposition}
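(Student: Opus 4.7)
The plan is to split the claim into two essentially independent statements: (i) that each $M_{\alpha\beta}$ is a well--defined non--negative integer, and (ii) that the class $[\mathrm{Res}_T(\IC_{Z_\alpha})]$ in $K_0(\Perv_T(\Gr_G^T))$ has exactly these numbers as its coordinates in the basis $\{[\IC_{y_\beta}]\}$. Claim (i) will follow immediately: $\IC_{Z_\alpha}$ is a constructible complex with finite--dimensional stalks, so each $IH^{i}(\IC_{Z_\alpha})_{y_\beta}$ is a finite--dimensional $\mathbb{C}$--vector space vanishing outside a bounded range, and summing over $i$ yields a finite non--negative integer.

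For (ii), I would first analyze the target category. Because $\Gr_G^T$ is a discrete set of $T$--fixed points indexed by $X_*(T)$, the perverse $t$--structure on $\Perv_T(\Gr_G^T)$ collapses to the standard one and every object decomposes canonically as a direct sum of $T$--equivariant skyscrapers at the points $y_\beta$. The simple objects are precisely the $\IC_{y_\beta}$ (up to $T$--character twists), so $K_0(\Perv_T(\Gr_G^T))$ is the free abelian group on $\{[\IC_{y_\beta}]\}$ and any class in it is characterized uniquely by its coordinates. This is the step that legitimizes writing an identity of the form \eqref{13}.

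The core computation is identifying those coordinates for $[\mathrm{Res}_T(\IC_{Z_\alpha})]$. The restriction of $\IC_{Z_\alpha}$ at $y_\beta$ is a bounded complex whose $i$--th cohomology has dimension $\dim IH^{i}(\IC_{Z_\alpha})_{y_\beta}$; passing to the Grothendieck group, the multiplicity of $[\IC_{y_\beta}]$ is, a priori, the alternating sum of these dimensions. The decisive ingredient is parity vanishing: under the identification $\mathcal{E}_\alpha \cong \IC_{Z_\alpha}$ recalled in \S\ref{cat}, combined with the parity theorem of \cite{JMW} for affine Grassmannians, all stalks $IH^{i}(\IC_{Z_\alpha})_{y_\beta}$ concentrate in a single parity. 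Hence no sign cancellation occurs and the alternating sum coincides with the total dimension $M_{\alpha\beta}$. Assembling the three steps then yields \eqref{13} with non--negative integer multiplicities.

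The main obstacle will be making the functor $\mathrm{Res}_T$ precise, since naively $\iota^{*}\IC_{Z_\alpha}$ is a complex on $\Gr_G^T$ that need not be perverse for the target $t$--structure. One must either first pass to perverse cohomology and then carefully track Euler characteristics, or --- as in the strategy above --- invoke parity vanishing to collapse the alternating sum to a genuine dimension. Once this reconciliation is in place, every remaining step is routine, and the non--negativity and integrality assertions drop out of the fact that $M_{\alpha\beta}$ was defined as a dimension from the outset.
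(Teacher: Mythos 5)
Your proof takes essentially the same route as the paper's: compute the stalk dimensions $\dim IH^*(\IC_{Z_\alpha})_{y_\beta}$, observe that $\Gr_G^T$ is discrete so $K_0(\Perv_T(\Gr_G^T))$ is free on the skyscraper classes $[\IC_{y_\beta}]$, and read off the multiplicities. The one genuine difference is that you are more careful than the paper at the crucial step. The paper's proof simply asserts that $\mathrm{Res}_T$ ``is exact'' and that the multiplicity ``is given by the total dimension of the stalk,'' silently passing over the fact that $\iota^*$ is not perverse-exact on the nose and that the Grothendieck class of a complex is a priori an \emph{alternating} sum, not a total dimension. You correctly flag this: the identity \eqref{13} only holds because of parity vanishing for $\mathcal{E}_\alpha \cong \IC_{Z_\alpha}$ (so the alternating sum has no cancellation and collapses to $M_{\alpha\beta}$). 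This is precisely the justification the paper's argument implicitly relies on but never states, so your version is both correct and, if anything, more complete than the original.
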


\begin{proof}
The functor $\mathrm{Res}_T$ is exact and sends a $T$--equivariant perverse sheaf on $\Gr_G$ to a complex supported on the discrete fixed point locus $\Gr_G^T$. Each summand of the restriction is therefore a direct sum of shifted skyscraper sheaves $\IC_{y_\beta}$. For a $T$ - fixed point $y_\beta$, the multiplicity of $\IC_{y_\beta}$ in
$\mathrm{Res}_T(\IC_{Z_\alpha})$ is given by the total dimension of the stalk IC $IH^*(\IC_{Z_\alpha})_{y_\beta}$. Hence, passing to the Grothendieck group yields \eqref{13} with $M_{\alpha\beta} \in \mathbb{Z}_{\ge 0}$.
\end{proof}

\noindent We define the following concluding matrices for our factorization.
\begin{definition}[Normalization Matrix $P$] \label{P}
Let $\{b_\alpha\}$ be the geometric MV parity basis of $\mathcal{K}_T(\mathcal{X}_{\le \lambda+\mu, 0})$. The normalization matrix $P$ is the diagonal matrix of size $d$ whose entries $P_{\alpha\alpha}$ are the unique scaling factors required to map the geometric classes to the normalized basis $\{v_\alpha\}$ of the $G^\vee$-representation. This ensures the fiber functor in \ref{Sat} maps the classes to an orthonormal weight-vector basis.
\end{definition}

\begin{definition}[The Transition Matrix $C$] \label{C}
The transition matrix $C$ is the matrix representing the change-of-basis from the convolution basis $\{u_k\}$ to the normalized MV basis $\{v_\alpha\}$ in the localized equivariant Grothendieck group of the truncated family.
\end{definition}

\begin{lemma}[Invertibility]
\label{lem:isomorphisms}
The localization matrix $Q$ and the specialization matrix $A$, viewed as
endomorphisms of the localized equivariant Grothendieck group
\[
\mathcal{K} := K_0^T(-)\otimes_R \mathcal{Q},
\qquad \mathcal{Q} := \operatorname{Frac}(R),
\]
are isomorphisms.
\end{lemma}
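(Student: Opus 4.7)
The plan is to handle $Q$ and $A$ by separate arguments: equivariant localization for $Q$, and proper flat base change along the BD family for $A$. Both reductions take place after tensoring with the fraction field $\mathcal{Q}$, where the equivariant Euler classes of normal bundles at $T$-fixed points become invertible, so that the localized bases are genuinely well-defined.

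For $Q$, I would invoke the Chriss--Ginzburg localization theorem in the form stated in \cite[Sect.~5.10]{Ch}: for a projective $T$-variety $X$ with isolated $T$-fixed points, the restriction $\iota^{*}\colon K_{0}^{T}(X)\otimes_{R}\mathcal{Q} \to K_{0}^{T}(X^{T})\otimes_{R}\mathcal{Q}$ is an isomorphism. The finite-type truncation $\mathcal{X}_{\le\lambda+\mu,u}$ places us in this setting, and by Definition~\ref{Q} the matrix $Q$ is precisely the matrix of $\iota^{*}$ relative to the bases $\{u_k\}$ and $\{c^{\mathrm{gen}}_\nu\}$, so invertibility is immediate. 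A concrete verification that I would also include as a check comes from the structure of $Q$ itself: ordering fixed points by a linear extension of the Bruhat order on coweights makes $Q$ upper triangular, with diagonal entries equal to the equivariant Euler classes $e_{T}(T_{y_{\nu}}\mathcal{X})$, each a product of nonzero characters of $T$ and hence a unit in $\mathcal{Q}$, so $\det Q \neq 0$.

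For $A$, I would first argue that $[\Psi]$ induces a well-defined homomorphism on $\mathcal{K}$: the truncated BD family $\pi$ is proper and flat by Remark~\ref{r}(1)-(2), and the unipotent nearby cycles functor preserves constructibility and the $T$-action, hence descends to the localized Grothendieck groups. Invertibility then splits into a dimension match and surjectivity. For the dimension match, Geometric Satake (Theorem~\ref{Sat}) together with the localization theorem identifies both the source and target of $A$, after tensoring with $\mathcal{Q}$, with categorifications of the single representation $V_{\lambda}\otimes V_{\mu}$; constancy of total rank along the flat proper family $\pi$ then reconciles the a priori different combinatorial indexings of $T$-fixed points on the two fibers and forces the ranks to be equal. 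For surjectivity, Proposition~\ref{prop:BD-Psi} yields the decomposition $\Psi(\mathcal{E}_{\lambda}\boxtimes\mathcal{E}_{\mu})\cong\bigoplus_{\nu}N_{\lambda,\mu}^{\nu}\,\mathcal{E}_{\nu}$, producing every indecomposable parity summand on the special fiber; chaining through the diagonal matrices $M$ and $P$ that connect parity classes to the special fixed-point basis then shows that $[\Psi]$ spans the target, which combined with the dimension match gives the desired isomorphism.

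The main obstacle I anticipate is justifying that $A$ is even a square matrix, since generic-fiber $T$-fixed points are indexed by pairs of coweights while special-fiber fixed points are single coweights, and the cardinalities of these two sets differ a priori. The resolution requires working only after tensoring with $\mathcal{Q}$ and only within the truncation $\mathcal{X}_{\le\lambda+\mu}$: the purity, unipotent monodromy, and semisimplicity properties invoked in Remark~\ref{r}(3)-(6), together with the flatness of $\pi$, enforce that the monodromy-weight-graded pieces of the nearby cycles supply exactly the total rank needed to match the generic side, so that rank equality — and hence bijectivity of $[\Psi]$ — holds on the nose in $\mathcal{K}$.
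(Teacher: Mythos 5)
For $Q$, your argument is essentially the paper's: both invoke the Chriss--Ginzburg localization theorem from \cite[Sect.~5.10]{Ch} on finite-type truncations, so that after tensoring with $\mathcal{Q}$ the restriction to $T$-fixed points is an isomorphism and $Q$ is its matrix. Your added upper-triangularity/determinant check is a reasonable sanity check but is not part of the paper's argument.

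For $A$ you take a genuinely different route, and it has a real gap. The paper's proof is a short geometric argument: the factorization property of the BD Grassmannian forces the $T$-fixed-point locus to be a \emph{trivial} family over the base, $\mathcal{X}^T \cong (\Gr_G^T \times \Gr_G^T)\times\mathbb{A}^1$; restricting to finite-type truncations, the localized fixed-point $K$-groups of the generic and special fibers are therefore canonically identified, and $A$ is the matrix of that identification, hence invertible. This is precisely the mechanism that dissolves what you correctly flag as the main obstacle (pairs of coweights on the generic side versus single coweights on the special side): once you see $\mathcal{X}^T$ is a constant family, the two sides are indexed by the same set and there is nothing left to match.

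Your proposed resolution instead appeals to ``constancy of total rank along the flat proper family'' together with geometric Satake. This does not close the gap. Flatness and properness of $\pi$ give constancy of the rank of $R\pi_*\mathcal{F}$ — that is, of hypercohomology, which is the thing geometric Satake sees — but they do \emph{not} give constancy of the number of $T$-fixed points fiber by fiber, which is what controls $\dim_{\mathcal{Q}} K_0^T(\mathcal{X}_t^T)\otimes_R\mathcal{Q}$. Fixed points of the fibers do collide as $t\to0$ (many pairs $(\nu_1,\nu_2)$ share the same sum $\nu_1+\nu_2$), so the naive fiber-wise count does jump; what rescues the statement is the scheme-theoretic fact about $\mathcal{X}^T$ as a family, not hypercohomological rigidity or purity of nearby cycles. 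Your surjectivity step has a related problem: Proposition~\ref{prop:BD-Psi} decomposes $\Psi(\mathcal{E}_\lambda\boxtimes\mathcal{E}_\mu)$ in the parity basis, but $A$ is defined with respect to the \emph{fixed-point} bases, and converting between the two goes through $M$, which is lower unitriangular (Proposition~\ref{M}), not diagonal as you assert — so ``chaining through the diagonal matrices $M$ and $P$'' does not chain. The paper's constant-family observation is the missing ingredient, and with it the whole dimension-match/surjectivity scaffolding becomes unnecessary.
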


\begin{proof}
We establish the invertibility of each factor separately.

\medskip
\noindent 1.($Q$): Let $\mathcal{X}_u := \mathcal{X}_{\le \lambda+\mu,u}$ denote a finite--type
truncation of the generic fiber of the Beilinson--Drinfeld Grassmannian. Let
$\iota : \mathcal{X}_\eta^T \hookrightarrow \mathcal{X}_u$ be the inclusion
of the fixed--point locus. By \cite[Sect. 5.10]{Ch}, the restriction homomorphism
\[
\iota^* : K_0^T(\mathcal{X}_u) \longrightarrow K_0^T(\mathcal{X}_u^T)
\]
becomes an isomorphism after inverting the multiplicative set $S \subset R$ generated by the equivariant Euler classes of the normal bundles to the connected components of $\mathcal{X}_u^T$.

Since the coefficient ring $R = K_0^T(\mathrm{pt})$ is an integral domain and $\mathcal{Q}=\operatorname{Frac}(R)$ inverts all nonzero elements of $R$ (and Euler classes are nonzero), the localized restriction map
\[
\iota^*_{\mathcal{Q}} :
K_0^T(\mathcal{X}_\eta)\otimes_R \mathcal{Q}
\xrightarrow{\sim}
K_0^T(\mathcal{X}_\eta^T)\otimes_R \mathcal{Q}
\]
is an isomorphism of finite--dimensional $\mathcal{Q}$--vector spaces.
By definition, the matrix $Q$ is the matrix representation of
$\iota^*_{\mathcal{Q}}$ with respect to the bases
$\{u_k\}$ and $\{c_\nu^{\mathrm{gen}}\}$; hence $Q$ is invertible.

\medskip
\noindent 2.($A$). Let $\pi:\mathcal{X}\to\mathbb{A}^1$ be the BD Grassmannian.
The torus $T$ acts on the fibers, while the action on the base $\mathbb{A}^1$ is trivial.
The fixed--point locus $\mathcal{X}^T$ defines a closed subscheme of $\mathcal{X}$ flat over
$\mathbb{A}^1$. The factorization property of the BD Grassmannian implies
that the family of fixed points is constant over the base:
\[
\mathcal{X}^T \cong (\Gr_G^T \times \Gr_G^T)\times \mathbb{A}^1.
\]
Restricting to finite - type truncations, the family
$\mathcal{X}_{\le \lambda+\mu}^T\to\mathbb{A}^1$ is a trivial family. Hence, the
specialization morphism induces a canonical isomorphism
\[
\sigma_{\mathcal{Q}} :
K_0^T(\mathcal{X}_{\eta}^T)\otimes_R \mathcal{Q}
\;\xrightarrow{\sim}\;
K_0^T(\mathcal{X}_{0}^T)\otimes_R \mathcal{Q}.
\]
With respect to the fixed--point bases
$\{c_\nu^{\mathrm{gen}}\}$ and $\{c_\nu^{\mathrm{spec}}\}$, this isomorphism is
represented by the matrix $A$, which is therefore invertible.

\end{proof}

We now construct the matrix $M$.
\begin{proposition}[Construction of $M$]
\label{M}
Let $V$ be the finite set of $T$-fixed points in the truncation $\mathrm{Gr}_{\le \lambda+\mu}$. The multiplicity matrix $M = (M_{\alpha\beta})_{\alpha,\beta \in V}$ is the unitriangular matrix defined by the stalk ranks of the Braden--MacPherson sheaves $\mathcal{M}_\alpha$. In characteristic zero, this matrix satisfies the following properties:
\begin{enumerate}
    \item Each entry $M_{\alpha\beta}$ is identical to the total dimension of the stalk cohomology of the intersection cohomology complex $\mathcal{IC}(\overline{\mathrm{Gr}}^\alpha, \mathbb{C})$ at the fixed point $y_\beta$:
    \begin{equation}
        M_{\alpha\beta} := \operatorname{rk}_{R} \mathcal{M}_\alpha(\beta) = \sum_{k \in \mathbb{Z}} \dim_{\mathbb{C}} H^{k}(i_\beta^! \mathcal{IC}(\overline{\mathrm{Gr}}^\alpha, \mathbb{C})).
    \end{equation}
    \item $M$ is lower-triangular with respect to the Bruhat order $\le$. Specifically, $M_{\alpha\alpha} = 1$ for all $\alpha$, and $M_{\alpha\beta} = 0$ for all $\beta \not\le \alpha$.
    \item For each $\alpha \in V$, the column vector $M_\alpha = (M_{\alpha\beta})_{\beta \in V}$ is uniquely determined by induction on the poset $(V, \le)$.
\end{enumerate}
\end{proposition}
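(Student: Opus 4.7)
The plan is to verify the three claims by leveraging the identification between parity sheaves on $\Gr_G$ and Braden--MacPherson sheaves on the moment graph, combined with the Rank of Braden--MacPherson Stalks theorem already established in Section~\ref{mom}. First I would set up the dictionary: in characteristic zero each indecomposable parity sheaf $\mathcal{E}_\alpha$ coincides with $\mathcal{IC}(\overline{\mathrm{Gr}}^\alpha,\mathbb{C})$ (see the discussion in Section~\ref{cat} following \cite{JMW}), and Fiebig's localization functor matches $\mathcal{E}_\alpha$ with the BMP sheaf $\mathcal{M}_\alpha$ on the moment graph. Under this correspondence, the stalk at a $T$-fixed point $y_\beta$ on the sheaf side corresponds to the module $\mathcal{M}_\alpha(\beta)$ on the graph side.

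For claim (1), I would combine the dictionary above with the multiplicity identity from Proposition~\ref{prop:multiplicities}. The restriction $\mathrm{Res}_T(\IC_{Z_\alpha})$ decomposes in $K_0(\Perv_T(\Gr_G^T))$ into skyscrapers at $y_\beta$ with integer coefficients equal to the total stalk IC dimension, which gives the equality $M_{\alpha\beta}=\sum_k \dim H^k(i_\beta^!\mathcal{IC}(\overline{\mathrm{Gr}}^\alpha,\mathbb{C}))$. Passing through the BMP side via the Rank of BMP Stalks theorem, this dimension equals $\operatorname{rk}_R \mathcal{M}_\alpha(\beta) = \dim_\mathbb{C}(B(\alpha)_{\delta\beta}\otimes_R R/\mathfrak{m})$, closing the loop and giving the two-sided equality asserted in (1).

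For claim (2), lower-triangularity is a support argument: the sheaf $\mathcal{IC}(\overline{\mathrm{Gr}}^\alpha,\mathbb{C})$ is supported on $\overline{\mathrm{Gr}^\alpha}$, whose $T$-fixed locus is exactly $\{y_\beta : \beta \le \alpha\}$ by the identification of $\Gr_G^T$ with the coweight lattice and the Bruhat-order description of closures of Schubert cells; hence $M_{\alpha\beta}=0$ whenever $\beta\not\le\alpha$. The normalization $M_{\alpha\alpha}=1$ then follows from the BMP axiom $\mathcal{M}_\alpha(\alpha)\cong R$, whose fiber at the maximal ideal $\mathfrak{m}$ is one-dimensional, equivalently from the fact that the IC complex restricted to the open smooth stratum $\mathrm{Gr}^\alpha$ is a shifted constant sheaf and has rank one stalk at the corresponding fixed point. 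For claim (3), I would invoke directly the inductive construction of $\mathcal{M}_\alpha$ recalled in Section~\ref{mom}: given the sheaf on vertices strictly below $\beta$, the stalk at $\beta$ is characterized as the unique maximal free $R$-submodule of the module of compatible boundary values $B(\alpha)_{\delta\beta}$, with torsion quotient. Specializing this uniqueness to each column $M_\alpha = (M_{\alpha\beta})_\beta$ yields the uniqueness of the column by descending induction on $(V,\le)$ starting from $M_{\alpha\alpha}=1$.

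The main obstacle is ensuring that the characteristic-zero identification $\mathcal{E}_\alpha\simeq\mathcal{IC}(\overline{\mathrm{Gr}}^\alpha,\mathbb{C})$ combines cleanly with Fiebig's localization functor and the equivariant restriction $\mathrm{Res}_T$ of Proposition~\ref{prop:multiplicities}, so that the numerical stalk dimensions on the sheaf side match the $S$-module ranks on the moment-graph side. Concretely, one needs to check that both sides satisfy precisely the GKM congruences labelled by the edge characters and that no torsion phenomena appear when tensoring with $R/\mathfrak{m}$; this is exactly the reason the rank convention $\operatorname{rk}_S B(v)_x := \dim_{\mathbb{C}}(B(v)_x\otimes_S S/\mathfrak{m})$ was introduced earlier in the excerpt, and I would appeal to it to make the final numerical comparison rigorous.
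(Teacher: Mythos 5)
Your proposal is correct and follows essentially the same route as the paper: identification of parity sheaves with IC complexes in characteristic zero, Fiebig's global-to-local functor to BMP sheaves for the rank identity in (1), a support/costalk-vanishing argument for (2), and the inductive boundary-value characterization for (3). The one small variation is that you route claim (1) through Proposition~\ref{prop:multiplicities} and the stalk-rank theorem of \S\ref{mom}, whereas the paper argues (1) directly from the affine-cell stratification, odd-degree vanishing, and the functor $\mathbb{V}$; this is compatible (stalk and costalk dimensions of self-dual IC complexes agree) but be aware that Proposition~\ref{prop:multiplicities} is stated for MV cycles $Z_\alpha$ while the present statement concerns $\overline{\mathrm{Gr}}^\alpha$, so a short remark identifying the relevant indecomposable summands would tighten the link.
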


\begin{proof}
The proof follows \cite[Theorem 4.6]{JMW} and \cite[Sect. 3.3]{Fie} but for  the affine Grassmannian.

\vspace{0.5em}
\noindent 1.  $\mathrm{Gr}_G$ which admits a stratification by $T$-orbits (Schubert cells) $X_w$, isomorphic to affine spaces. The existence of such a stratification ensures that the odd-degree cohomology of the constant sheaf vanishes on each stratum, \cite{MV}. In characteristic zero, this condition implies that the indecomposable $T$-equivariant parity sheaves $\mathcal{E}_\alpha$ coincide with the intersection cohomology complexes $\mathcal{IC}(\overline{\mathrm{Gr}}^\alpha, \mathbb{C})$.

\medskip

\noindent Moreover, in \cite{Fie} a global-to-local functor $\mathbb{V}$ from the category of equivariant parity sheaves to the category of BMP sheaves on the moment graph $\mathcal{G}$ is given. Let $\mathcal{M}_\alpha = \mathbb{V}(\mathcal{E}_\alpha)$. For any vertex $\beta$, the stalk $\mathcal{M}_\alpha(\beta)$ is a free module over the equivariant cohomology ring of a point $R = H^*_T(\text{pt}) \cong \text{Sym}(\mathfrak{t}^*)$. The rank of this free module is preserved under the map $R \to \mathbb{C}$, which identifies the algebraic rank with the total dimension of the ordinary stalk cohomology:
\[ \operatorname{rk}_R \mathcal{M}_\alpha(\beta) = \dim_{\mathbb{C}} \left( \mathcal{M}_\alpha(\beta) \otimes_R \mathbb{C} \right) \cong \dim_{\mathbb{C}} \bigoplus_{k \in \mathbb{Z}} H^k(i_\beta^! \mathcal{IC}(\overline{\mathrm{Gr}}^\alpha, \mathbb{C})). \]

\vspace{0.5em}
\noindent 2. The intersection cohomology complex $\mathcal{IC}(\overline{\mathrm{Gr}}^\alpha, \mathbb{C})$ is supported on the Schubert variety $\overline{\mathrm{Gr}}^\alpha$, which is the union of cells $\mathrm{Gr}^\gamma$ for $\gamma \le \alpha$. For any $\beta \not\le \alpha$, the $T$-fixed point $y_\beta$ lies outside $\overline{\mathrm{Gr}}^\alpha$; thus, the costalk $i_\beta^! \mathcal{IC}(\overline{\mathrm{Gr}}^\alpha, \mathbb{C})$ vanishes, and $M_{\alpha\beta} = 0$, which gives us the lower-triangularity. For the diagonal entries, the restriction of $\mathcal{IC}_\alpha$ to the open cell $\mathrm{Gr}^\alpha$ is (up to a shift) the constant sheaf. The stalk of the constant sheaf at $y_\alpha$ is a one-dimensional $\mathbb{C}$-vector space, ensuring $M_{\alpha\alpha} = 1$.

\vspace{0.5em}
\noindent 3. Finally, for the inductive construction we have: The moment graph $\mathcal{G}$ associated with the affine Grassmannian satisfies the GKM conditions, i.e., for any vertex $\beta$, the labels of the edges incident to $\beta$ are pairwise linearly independent. Under these conditions, the BMP sheaf $\mathcal{M}_\alpha$ is a flabby sheaf uniquely characterized by its stalks. For a fixed $\alpha$, we start with $\mathcal{M}_\alpha(\alpha) = R$. For any $\beta < \alpha$, the stalk $\mathcal{M}_\alpha(\beta)$ is defined as the unique free $R$-summand of the kernel of the map:
\[ \mathcal{K}_\beta := \text{ker} \left( \bigoplus_{\beta \xrightarrow{\ell} \gamma} \mathcal{M}_\alpha(\gamma) \to \bigoplus_{\beta \xrightarrow{\ell} \gamma} \mathcal{M}_\alpha(\gamma) \otimes_R R/\ell R \right), \]
where the sum is over edges connecting $\beta$ to vertices $\gamma > \beta$. Because the moment graph is directed and the poset is finite, this construction proceeds by induction down the Bruhat order. The GKM  congruences and the properties of parity sheaves ensure \cite{Fie} that $\mathcal{K}_\beta$ is indeed a projective $R$-module (hence free), and the choice of $\mathcal{M}_\alpha(\beta)$ is uniquely determined by the image of the global sections. Thus, the column $M_\alpha$ is uniquely determined.
\end{proof}

\noindent  The following theorem is the main result of this paper. Note that our factorization is understood to hold in the localized equivariant Grothendieck group $\mathcal{K} \cong K_0^T(\Gr) \otimes_{R} \operatorname{Frac}(R)$. By working in the localized category (where $R = H^*_T(\mathrm{pt})$), we ensure that the transition matrix $C$ is a well-defined operator between free $\mathcal{Q}$-modules, escaping potential torsion issues or non-projective effects that may arise in the non-localized, integral equivariant setting.
\begin{theorem}[Structural Factorization]
\label{thm:main-factorization-rigorous}
Let $R = H^*_T(\text{pt}) \cong \mathbb{C}[\mathfrak{t}]$ be the equivariant cohomology ring of the point, and let $\mathcal{Q} = \text{Frac}(R)$ be its field of fractions. Let $\mathcal{K}_T(X_{\le N,0})$ denote the localized $T$-equivariant Grothendieck group of the truncated special fiber. Let also the proper convolution map $\conv: \Gr_\lambda \tilde{\times} \Gr_\mu \to \Gr_{\le \lambda+\mu}$ where its image is contained within the truncation (Remark~\ref{r}). Given a fixed total ordering $\preceq$ on the coweight lattice $X_*(T)$ compatible with the dominance order, the transition matrix $C$ between the convolution basis $\{u_k\}$ and the normalized MV parity basis $\{v_\alpha\}$ admits a structural factorization:
\[ C = P \cdot M \cdot A \cdot Q^{-1} \]
where the factorization is unique up to the diagonal normalization of the fiber functor in \ref{Sat} and the choice of localization conventions.
\end{theorem}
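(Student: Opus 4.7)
The plan is to verify that the asserted factorization is simply the composition of four well-defined change-of-basis matrices, each corresponding to one of the geometric operations introduced in the excerpt, and then to establish the claimed uniqueness. The starting observation is that all five bases
$\{u_k\},\ \{c^{\mathrm{gen}}_\nu\},\ \{c^{\mathrm{spec}}_\nu\},\ \{b_\alpha\},\ \{v_\alpha\}$
live in finite--dimensional $\mathcal{Q}$--vector spaces that are canonically isomorphic to $\mathcal{K}_T(\mathcal{X}_{\le\lambda+\mu,0})\otimes_R \mathcal{Q}$ (after identifying the generic and special fibers through the localized restriction maps), so the composite $P\cdot M\cdot A\cdot Q^{-1}$ makes sense as an endomorphism of a single $d$--dimensional $\mathcal{Q}$--space.

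First, I would verify factor by factor that each of $Q^{-1}$, $A$, $M$, and $P$ is a well-defined $\mathcal{Q}$--linear isomorphism recording the transition between the two bases labeling its arrows. For $Q$, this is Definition~\ref{Q} combined with Lemma~\ref{lem:isomorphisms}(1), which invokes the equivariant localization theorem of \cite[Sect. 5.10]{Ch}; the non-vanishing of the Euler classes of the normal bundles at $T$--fixed points of the truncated generic fiber gives invertibility over $\mathcal{Q}$. For $A$, I would use Lemma~\ref{lem:isomorphisms}(2): the BD family is $T$--equivariantly trivial on fixed-point loci, so the specialization from generic to special fiber is represented on the fixed-point bases by an invertible matrix; the role of Proposition~\ref{prop:BD-Psi} and Remark~\ref{r} is to ensure that the (unipotent) nearby cycles functor is $t$--exact on the relevant parity/IC classes, so the operator $[\Psi_\pi]$ descends to a well-defined $K$-theoretic map. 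For $M$, Proposition~\ref{M} provides unitriangularity with respect to the Bruhat order, hence automatic invertibility; the BMP--to--IC identification in characteristic zero gives exactly the matrix whose columns express the class $[\mathrm{Res}_T(\IC_{Z_\alpha})]$ as a sum of fixed-point skyscraper classes $[\IC_{y_\beta}]$ (Proposition~\ref{prop:multiplicities}). Finally, $P$ is the nonzero diagonal of Definition~\ref{P} aligning geometric parity classes with the orthonormal weight basis determined by the fiber functor of Theorem~\ref{Sat}.

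Given these, the proof of the factorization itself reduces to the transitivity of change-of-basis matrices: the matrix recording the transition from $\{u_k\}$ to $\{v_\alpha\}$ equals the product of the matrices along any chain of intermediate bases between them. Since $C$ is defined in Definition~\ref{C} as the transition from $\{u_k\}$ to $\{v_\alpha\}$ in $\mathcal{K}_T(\mathcal{X}_{\le\lambda+\mu,0})\otimes_R\mathcal{Q}$, and diagram~\eqref{diag:main-factorization} exhibits $P\cdot M\cdot A\cdot Q^{-1}$ as the composite of the four intermediate transitions, the two matrices coincide. The content to check carefully is that the four arrows describe \emph{successive} transitions within the \emph{same} localized Grothendieck group: this is where I would use the localization isomorphism to identify $\mathcal{K}_T(\mathcal{X}_{\le\lambda+\mu,u})\otimes\mathcal{Q}$ with its fixed-point version (joining $Q^{-1}$ to $A$), and the canonical isomorphism of Lemma~\ref{lem:isomorphisms}(2) together with the BBD-type decomposition provided by Proposition~\ref{prop:BD-Psi} to identify the special-fiber fixed-point classes with BMP parity classes (joining $A$ to $M$).

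For uniqueness, I would observe that once the bases $\{u_k\}$ and $\{v_\alpha\}$ are fixed, $C$ is uniquely determined; the individual factors are in turn uniquely determined by fixing (i) the localization convention for $Q^{-1}$, i.e.\ the choice of equivariant Euler classes compatible with $\Phi^+$, and (ii) the diagonal normalization $P$ aligning parity classes with the Satake fiber functor. Any rescaling of these conventions induces conjugation of $A$ and $M$ by the corresponding diagonal matrices and leaves the product $P\cdot M\cdot A\cdot Q^{-1}$ invariant. The main obstacle, in my view, is not the linear algebra but the categorical compatibility at the junction between $A$ and $M$: one must confirm that the $K$--class of $\Psi_\pi(\mathcal{E}_\lambda\boxtimes\mathcal{E}_\mu)$, expanded on the special-fiber fixed-point basis, is precisely the vector to which $M$ should then be applied, i.e.\ that the semisimple decomposition of Proposition~\ref{prop:BD-Psi} together with Proposition~\ref{prop:multiplicities} matches the column structure of $M$ defined via the BMP recursion. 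All other steps are routine once this compatibility is recorded.
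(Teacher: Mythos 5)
Your proposal takes essentially the same approach as the paper's proof: both identify $C$ as the composite of the four change-of-basis matrices along the chain $\{u_k\}\to\{c^{\mathrm{gen}}_\nu\}\to\{c^{\mathrm{spec}}_\nu\}\to\{b_\alpha\}\to\{v_\alpha\}$, invoking the localization isomorphism for $Q$, the nearby-cycles specialization for $A$, the BMP stalk recursion for $M$, and the diagonal normalization for $P$, and then concluding by transitivity of change of basis. Your version is in fact somewhat more explicit than the paper's, which simply traces a class through the four operators, whereas you flag the identification of the generic and special-fiber $K$-groups, the invertibility of each factor, the uniqueness modulo diagonal conventions, and the compatibility required at the $A$--$M$ junction, all of which the paper's proof leaves implicit.
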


\begin{proof}
We start with a convolution class $u \in \mathcal{K}_T(\mathcal{X}_{\le N, \eta})$ and move through the following sequence of transformations; at first we have localization. By Definition~\ref{Q}, the operator $Q^{-1}$ maps the class $u$ to its localized representation at the generic $T$-fixed points.
Then specialization follows: the matrix $A$ from Definition \ref{A} applies the nearby cycles functor $[\Psi]$, transporting these generic fixed-point weights to the special fiber $\mathcal{X}_{\le N, 0}$. Then we have $M$, calculated which assembles these weights into $\{b_\alpha\}$. Finally, the matrix $P$ by definition \ref{P} rescales these classes to the basis $\{v_\alpha\}$. Therefore, the composition of these four linear operators $(P \circ M \circ A \circ Q^{-1})$ maps the input basis $\{u_k\}$ to the output basis $\{v_\alpha\}$. By the definition of $C$ as the transition matrix between these two bases, the matrix equality $C = PMAQ^{-1}$ holds.
\end{proof}

\begin{remark} The matrix $P$ is diagonal with nonzero diagonal entries (normalization factors), so it is invertible over $\mathcal{Q}$. Since as we proved $A$, and $P$ are all isomorphisms of vector spaces, the rank of $C$ is determined exactly by the rank of the remaining factor $M$.

\end{remark}

\begin{corollary}[Geometric positivity and integrality]
\label{cor:positivity-corrected}
In the factorization $
C = P \cdot M \cdot A \cdot Q^{-1},
$
the matrices $M$ and $A$ have entries in $\mathbb{Z}_{\ge 0}$.
\end{corollary}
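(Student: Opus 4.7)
The strategy is to treat the two assertions separately, since $M$ and $A$ arise from genuinely different mechanisms: $M$ records local intersection-cohomological stalks (pointwise data), whereas $A$ records the action of the nearby cycles functor on fixed-point classes (global data from the BD family). In each case I will reduce the non-negativity and integrality to a categorical statement already assembled in the preceding sections, so no new cohomological input is required.

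For $M$, the claim is essentially immediate from the construction. By Proposition \ref{prop:multiplicities} together with Proposition \ref{M}, the entry $M_{\alpha\beta}$ equals the total $\mathbb{C}$-dimension of the stalk cohomology $\IH^{*}(\IC_{Z_\alpha})_{y_\beta}$, and the same arguments give $M_{\alpha\alpha}=1$ and $M_{\alpha\beta}=0$ for $\beta\not\le\alpha$. Since dimensions of $\mathbb{C}$-vector spaces are non-negative integers, one concludes $M_{\alpha\beta}\in\mathbb{Z}_{\ge 0}$ with no further work.

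For $A$, the plan is to invoke the semisimplicity package of Remark \ref{r}. In characteristic zero, the unipotent nearby cycles functor $\Psi^{u}_{\pi}$ sends pure polarizable Hodge modules to semisimple perverse sheaves, and by the BD factorization property established in Lemma \ref{lem:isomorphisms} the fixed-point family $\mathcal{X}^{T}\cong(\Gr_G^{T}\times\Gr_G^{T})\times\mathbb{A}^{1}$ is constant over $\mathbb{A}^{1}$. Decomposing $[\Psi(c^{\mathrm{gen}}_\nu)]$ in the basis $\{c^{\mathrm{spec}}_\beta\}$ therefore expresses each entry $A_{\beta\nu}$ as a multiplicity of a simple summand in a genuine perverse-sheaf decomposition, which belongs to $\mathbb{Z}_{\ge 0}$. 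Concretely, one uses that on fixed-point classes the specialization is controlled by the proper restriction of $\Psi$ to the trivial family $\mathcal{X}^{T}\to\mathbb{A}^{1}$, so that $[\Psi]$ acts as a matching map whose coefficients count effective multiplicities.

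The one place where care is required, and where I expect the main obstacle to lie, is ensuring that this integrality survives the localization at $\mathcal{Q}=\operatorname{Frac}(R)$ used throughout the factorization. The bases $\{c^{\mathrm{gen}}_\nu\}$ and $\{c^{\mathrm{spec}}_\beta\}$ must be normalized as the classes of the fixed points themselves, rather than their Euler-class rescalings, so that the matrix of $[\Psi]$ on them coincides with its integral equivariant $K$-theoretic matrix and not with a rescaled version involving denominators in $R$. Under the localization convention fixed immediately after Definition \ref{Q}, the inverse Euler classes are absorbed entirely into $Q^{-1}$, and $A$ inherits the non-negative integer coefficients supplied by the decomposition theorem and Saito's weight filtration. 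Once this bookkeeping is isolated, both claims of the corollary reduce to results already proven in Section 3.
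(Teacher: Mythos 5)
Your argument for $M$ coincides with the paper's: both reduce to Proposition~\ref{M} (and Proposition~\ref{prop:multiplicities}), which identify $M_{\alpha\beta}$ with the total dimension of an IC stalk, hence a non-negative integer. For $A$, the paper gives a shorter and more elementary argument: since the $T$-fixed locus is $0$-dimensional, specialization of a $T$-invariant $0$-cycle along the flat BD family produces an effective $0$-cycle, so the limit multiplicities $A_{ij}$ are automatically in $\mathbb{Z}_{\ge 0}$. Your route invokes the Saito/MHM semisimplicity machinery of Remark~\ref{r} to claim that each $A_{\beta\nu}$ is a multiplicity of a simple summand of $\Psi^u$. That machinery, as set up in the paper, is calibrated to $\mathcal{E}_\lambda\boxtimes\mathcal{E}_\mu$ on the full convolution space, not to skyscraper classes on the discrete fixed locus, so strictly speaking you are reaching for a heavier tool than the problem requires; the part of your argument that actually closes the gap is the observation (drawn from Lemma~\ref{lem:isomorphisms}) that $\mathcal{X}^T\cong(\Gr_G^T\times\Gr_G^T)\times\mathbb{A}^1$ is a constant family, which is of the same flat-family flavor as the paper's effective-$0$-cycle argument. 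On the positive side, your caveat about the normalization of $\{c^{\mathrm{gen}}_\nu\}$ and $\{c^{\mathrm{spec}}_\beta\}$ --- that the Euler-class denominators must live entirely in $Q^{-1}$ so that $A$ retains integral entries after localization at $\mathcal{Q}$ --- is a real concern that the paper's proof leaves implicit, and flagging it is a genuine improvement.
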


\begin{proof}
The integrality and positivity of $M$ are guaranteed by Proposition \ref{M}, as stalk ranks of parity sheaves are non-negative integers by construction. For $A$, we observe that the nearby cycles functor $\Psi$ maps the $T$-fixed point basis of the generic fiber to the $T$-fixed point basis of the special fiber. Since the specialization of a zero-dimensional $T$-invariant subscheme in a flat family results in an effective 0-cycle, the coefficients $A_{ij}$ representing the limit multiplicities are necessarily non-negative integers.
\end{proof}

\noindent Condition (A) is introduced in \cite{bau} as a combinatorial criterion on reduced words which ensures that the cluster monomial associated with the reduced word coincides with a single MV basis element \cite[Prop.~7.2]{bau}. Geometrically, this corresponds to the transversality (multiplicity one) of the associated
intersection of cycles in the affine Grassmannian. When Condition (A) is
satisfied, the corresponding column of the transition matrix $C$ is monomial. As shown in \cite[Sect.~7.2]{bau}, Condition (A) can fail sometimes, e.g., in type $D_4$. In these cases the bases diverge, and the transition matrix exhibits coefficients strictly greater than one, see \cite[Eq.~(35)]{bau}.

\begin{proposition}[Connection with Condition (A)]\label{thm:conditionA-C-corrected}
Let $\mathbf{i} = (i_1, \dots, i_\ell)$ be a reduced word. Let $u_{\mathbf{i}}$ be the element of the convolution basis corresponding to the tensor product of extremal weight vectors determined by $\mathbf{i}$ (as defined in \cite[Prop. 7.2]{bau}). Consider the structural factorization of the transition matrix $C = P M A Q^{-1}$ and let $C_{\mathbf{i}}$ and $M_{\mathbf{i}}$ denote the column vectors of $C$ and $M$ corresponding to the index of $u_{\mathbf{i}}$. If $\mathbf{i}$ satisfies Condition (A), then:
\begin{enumerate}
    \item The transition column $C_{\mathbf{i}}$ has exactly one nonzero entry.
    \item $M_{\mathbf{i}} = e_{\mathbf{i}}$ (the standard basis vector with $1$ on the diagonal and $0$ elsewhere).
\end{enumerate}
\end{proposition}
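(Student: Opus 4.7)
The plan is to reduce the statement to a local assertion about a single column of $M$ and then propagate it through the remaining three factors of the factorization $C = P M A Q^{-1}$. By \cite[Prop.~7.2]{bau}, Condition (A) for the reduced word $\mathbf{i}$ is precisely the condition that the cluster monomial attached to $\mathbf{i}$ coincides with a single MV basis element $v_{\alpha(\mathbf{i})}$; geometrically this is the statement that the relevant intersection of opposite semi-infinite orbits with the Schubert stratum defining $u_{\mathbf{i}}$ is transverse, so the underlying MV cycle $Z_{\alpha(\mathbf{i})}$ is smooth at the relevant $T$-fixed point $y_{\alpha(\mathbf{i})}$.

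First I would translate this transversality into an IC-stalk statement. Since $Z_{\alpha(\mathbf{i})}$ is smooth at $y_{\alpha(\mathbf{i})}$ and no lower stratum contributes, the restriction $\mathrm{Res}_T(\IC_{Z_{\alpha(\mathbf{i})}})$ on the $T$-fixed point locus (Proposition~\ref{prop:multiplicities}) is concentrated at $y_{\alpha(\mathbf{i})}$ with multiplicity one. By the interpretation of the entries of $M$ as stalk ranks of Braden--MacPherson sheaves in Proposition~\ref{M}, this means that the column $M_{\mathbf{i}}$ of $M$ indexed by $\alpha(\mathbf{i})$ satisfies $M_{\alpha(\mathbf{i}),\alpha(\mathbf{i})} = 1$ (the diagonal value from Proposition~\ref{M}(2)) and $M_{\alpha(\mathbf{i}),\beta} = 0$ for all $\beta < \alpha(\mathbf{i})$ (by transversality, there is no extra IC contribution from lower fixed points). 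Combined with the lower-triangularity $M_{\alpha(\mathbf{i}),\beta} = 0$ for $\beta \not\le \alpha(\mathbf{i})$, this yields $M_{\mathbf{i}} = e_{\mathbf{i}}$, proving assertion (2).

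Next I would deduce assertion (1) from the factorization. By Lemma~\ref{lem:isomorphisms}, $Q^{-1}$ sends $u_{\mathbf{i}}$ to a $\mathcal{Q}$-linear combination of generic fixed-point classes $c^{\mathrm{gen}}_\nu$; under Condition (A) the cycle $Z_{\alpha(\mathbf{i})}$ is already transversal at the generic fiber (the BD family being trivial on $T$-fixed loci, by the factorization property invoked in the proof of Lemma~\ref{lem:isomorphisms}), so $Q^{-1} u_{\mathbf{i}}$ is supported at the single fixed point $c^{\mathrm{gen}}_{\alpha(\mathbf{i})}$ up to the invertible Euler-class normalization fixed in Definition~\ref{Q}. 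The trivial family identification $\mathcal{X}^T \cong \Gr_G^T \times \Gr_G^T \times \mathbb{A}^1$ used in the proof of invertibility of $A$ then sends $c^{\mathrm{gen}}_{\alpha(\mathbf{i})}$ to $c^{\mathrm{spec}}_{\alpha(\mathbf{i})}$, so that $A Q^{-1} u_{\mathbf{i}}$ remains supported on a single fixed point. Applying $M$ yields $b_{\alpha(\mathbf{i})}$ by step two, and the diagonal $P$ of Definition~\ref{P} scales this to a nonzero multiple of $v_{\alpha(\mathbf{i})}$. Hence $C_{\mathbf{i}}$ has exactly one nonzero entry, namely in row $\alpha(\mathbf{i})$.

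The subtle step I expect to be the main obstacle is the translation between the combinatorial Condition (A) on reduced words and the geometric assertion that both the generic localization $Q^{-1} u_{\mathbf{i}}$ and the IC stalk of $Z_{\alpha(\mathbf{i})}$ are monomial. The first half of this translation requires matching the cluster-monomial characterization of \cite[Prop.~7.2]{bau} with our fixed-point basis $\{c^{\mathrm{gen}}_\nu\}$ via the extremal weight vectors attached to $\mathbf{i}$; the second half requires verifying that transversality of the intersection in \cite[Sect.~7.2]{bau} is exactly the stalk-vanishing statement of Proposition~\ref{M}. Once these two identifications are recorded explicitly, the rest of the argument is a mechanical propagation through the invertible factors $P$, $A$, $Q^{-1}$, none of which can introduce or cancel nonzero entries in a column supported on a single fixed point.
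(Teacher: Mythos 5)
Your proof runs in the opposite direction to the paper's. The paper takes \cite[Prop.~7.2]{bau} at face value to get assertion~(1) --- the cluster monomial is a single MV element, so the column $C_{\mathbf{i}}$ is monomial by definition --- and then deduces assertion~(2) by pure linear algebra: since $C_{\mathbf{i}}=\frac{a_{\mathbf{i}}}{q_{\mathbf{i}}}\,P\,M_{\mathbf{i}}$ and $P,A,Q$ are invertible diagonals, the vanishing of the off-diagonal entries of $C_{\mathbf{i}}$ forces $M_{\mathbf{i}}=e_{\mathbf{i}}$. You instead try to establish $M_{\mathbf{i}}=e_{\mathbf{i}}$ directly from geometry and then propagate forward. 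That route would be genuinely interesting if it worked, because it would give a geometric \emph{explanation} for Condition~(A), not just a reformulation; but there is a gap in your argument for (2).

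The gap is in the step where you pass from transversality to $M_{\mathbf{i}}=e_{\mathbf{i}}$. You write that smoothness of $Z_{\alpha(\mathbf{i})}$ at $y_{\alpha(\mathbf{i})}$ implies ``$\mathrm{Res}_T(\IC_{Z_{\alpha(\mathbf{i})}})$ is concentrated at $y_{\alpha(\mathbf{i})}$ with multiplicity one.'' This is not what smoothness buys you. For a smooth variety the $\IC$-complex is the shifted constant sheaf, whose restriction to the fixed-point locus is nonzero (of rank $1$) at \emph{every} $T$-fixed point in the cycle, not just at $y_{\alpha(\mathbf{i})}$; smoothness kills \emph{excess} stalk multiplicity, it does not kill the stalk. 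There is also a row/column slip: the column $M_{\mathbf{i}}$ that appears in the factorization has entries $M_{\alpha,\,\nu(\mathbf{i})}$ for varying $\alpha$ (stalks of the various $\IC_{Z_\alpha}$ at the fixed point attached to $\mathbf{i}$), whereas the quantities you estimate are $M_{\alpha(\mathbf{i}),\,\beta}$ for varying $\beta$ (stalks of the single sheaf $\IC_{Z_{\alpha(\mathbf{i})}}$ at all fixed points), i.e.\ a \emph{row} of $M$ in the notation of Proposition~\ref{prop:multiplicities}. Your transversality input, even if the concentration claim were true, would therefore speak to the wrong vector. The cleanest fix is to follow the paper's order: cite \cite[Prop.~7.2]{bau} to get (1), then run the linear-algebra reduction through $C=PMAQ^{-1}$ to extract (2); the geometry of transversality then appears as a \emph{consequence} rather than as an input.
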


\begin{proof}
Assume Condition (A) holds for the reduced word $\mathbf{i}$. Then , the cluster monomial defined by $\mathbf{i}$ belongs to the MV basis \cite[Prop. 7.2]{bau}. In the context of the transition matrix, the cluster monomial corresponds to the element $\langle\langle Z \rangle\rangle$ (the tensor product element), and the MV basis corresponds to $\langle Z \rangle$. 
Proposition 7.2 states that the expansion of this specific element contains a single term. Consequently, the column of the transition matrix $C$ corresponding to $u_{\mathbf{i}}$ contains exactly one nonzero entry (the entry on the diagonal, by \cite[Prop.5.9]{bau}). Thus, $C_{\mathbf{i}} = \gamma e_{\mathbf{i}}$ for some scalar $\gamma \neq 0$.

\medskip

\noindent We now analyze the factorization $C = P M A Q^{-1}$ restricted to this column. Since $Q$ and $A$ are diagonal matrices, their inverse and product act on the standard basis vector $e_{\mathbf{i}}$ by scalar multiplication. Let $q_{\mathbf{i}}$ and $a_{\mathbf{i}}$ be the diagonal entries of $Q$ and $A$ at index $\mathbf{i}$. We have:
\[
(A Q^{-1}) e_{\mathbf{i}} = \frac{a_{\mathbf{i}}}{q_{\mathbf{i}}} e_{\mathbf{i}}.
\]
Substituting this into the factorization:
\[
C_{\mathbf{i}} = (P M) \left( \frac{a_{\mathbf{i}}}{q_{\mathbf{i}}} e_{\mathbf{i}} \right) = \frac{a_{\mathbf{i}}}{q_{\mathbf{i}}} P (M e_{\mathbf{i}}) = \frac{a_{\mathbf{i}}}{q_{\mathbf{i}}} P M_{\mathbf{i}}.
\]
Let $M_{\mathbf{i}} = [m_{1,\mathbf{i}}, \dots, m_{d,\mathbf{i}}]^T$. Since $M$ is lower unitriangular, $m_{\mathbf{i},\mathbf{i}} = 1$ and $m_{j,\mathbf{i}} = 0$ for $j < \mathbf{i}$.
Since $P$ is diagonal with entries $p_j$, the $j$-th entry of the vector $P M_{\mathbf{i}}$ is $p_j m_{j,\mathbf{i}}$. Thus, the equation for the column $C_{\mathbf{i}}$ becomes:
\[
(C_{\mathbf{i}})_j = \frac{a_{\mathbf{i}} p_j}{q_{\mathbf{i}}} m_{j,\mathbf{i}}.
\]
This means that $C_{\mathbf{i}}$ has only one nonzero entry at the diagonal position $j=\mathbf{i}$. Therefore, for all $j \neq \mathbf{i}$, we must have:
\[
0 = \frac{a_{\mathbf{i}} p_j}{q_{\mathbf{i}}} m_{j,\mathbf{i}}.
\]
Since $P, A, Q$ are invertible (diagonal matrices with nonzero entries), the factor $\frac{a_{\mathbf{i}} p_j}{q_{\mathbf{i}}}$ is nonzero. This forces $m_{j,\mathbf{i}} = 0$ for all $j \neq \mathbf{i}$ and since $m_{\mathbf{i},\mathbf{i}} = 1$, we conclude that $M_{\mathbf{i}} = e_{\mathbf{i}}$. 
\end{proof}

\begin{corollary}[Geometric Support of the Transition Matrix]
\label{cor:support-rigorous}
Let $C_{\alpha, k}$ be an entry of the transition matrix $C$. If $C_{\alpha, k} \neq 0$, then there must exist a weight $\nu$ such that the $T$-fixed point $y_\nu$ is contained in the closure of the MV cycle $\overline{Z}_\alpha$ and the specialization coefficient $A_{\nu k}$ is non-zero.
\end{corollary}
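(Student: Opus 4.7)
The plan is to trace the nonvanishing of $C_{\alpha,k}$ backwards through the canonical factorization $C = P \cdot M \cdot A \cdot Q^{-1}$ established in Theorem \ref{thm:main-factorization-rigorous}. Both $P$ (by Definition \ref{P}) and $Q^{-1}$ (whose entries in the generic fixed-point basis are inverse equivariant Euler classes of the normal bundles, by Definition \ref{Q} and the convention fixed immediately afterward) are diagonal in their respective bases. Consequently, the fourfold matrix product collapses to
\[
C_{\alpha,k} \;=\; P_{\alpha\alpha}\,(Q^{-1})_{kk}\sum_{\nu} M_{\alpha\nu}\,A_{\nu k}.
\]

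First I would observe that $P_{\alpha\alpha}$ is a nonzero normalization factor (by Definition \ref{P}) and $(Q^{-1})_{kk}$ is the inverse of a nonzero equivariant Euler class, so both are units in $\mathcal{Q}$ by Lemma \ref{lem:isomorphisms}. Therefore, the hypothesis $C_{\alpha,k}\neq 0$ forces $\sum_\nu M_{\alpha\nu}A_{\nu k} \neq 0$, which in turn forces the existence of at least one index $\nu$ for which $M_{\alpha\nu}\neq 0$ and $A_{\nu k}\neq 0$ simultaneously.

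Second, I would convert the algebraic nonvanishing $M_{\alpha\nu}\neq 0$ into the geometric containment $y_\nu\in\overline{Z}_\alpha$. By Proposition \ref{M}(1), $M_{\alpha\nu}$ equals the total stalk dimension $\sum_k \dim_{\mathbb{C}} H^k(i_\nu^!\,\IC(\overline{Z}_\alpha,\mathbb{C}))$. Since $\IC(\overline{Z}_\alpha,\mathbb{C})$ is supported exactly on the closure of the MV cycle, a nonvanishing stalk at $y_\nu$ is equivalent to $y_\nu\in\overline{Z}_\alpha$. Combined with the surviving specialization coefficient $A_{\nu k}\neq 0$, this produces the $\nu$ required by the statement. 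Equivalently, one may invoke Proposition \ref{M}(2): $M_{\alpha\nu}\neq 0$ implies $\nu\leq\alpha$ in the Bruhat order, which is the poset-theoretic translation of $y_\nu\in\overline{Z}_\alpha$.

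The main obstacle is the justification that $Q^{-1}$ is genuinely diagonal---rather than merely having its diagonal prescribed---in the bases $\{u_k\}$ and $\{c^{\mathrm{gen}}_\nu\}$. This rests on the normalization convention stated after Definition \ref{Q}, together with the identification between indecomposable summands of $\conv_!(\mathcal{E}_\lambda\,\tilde\boxtimes\,\mathcal{E}_\mu)$ and the generic $T$-fixed points of the BD family via equivariant localization \cite[Sect.~5.10]{Ch}. Once this bijection and the Euler-class normalization are fixed, the diagonality is a direct consequence of the localization isomorphism, and the remainder of the argument reduces to the bookkeeping in matrix multiplication above. If one prefers to avoid the diagonal convention entirely, a slightly weaker but still meaningful support statement follows: there exist $\nu,\gamma$ with $M_{\alpha\nu}\neq 0$, $A_{\nu\gamma}\neq 0$, and $(Q^{-1})_{\gamma k}\neq 0$, which again localizes $y_\nu$ inside $\overline{Z}_\alpha$.
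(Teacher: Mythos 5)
Your proposal is correct and follows essentially the same route as the paper: expand $C_{\alpha,k}$ via the factorization $C=PMAQ^{-1}$ using the diagonality of $P$ and $Q^{-1}$, deduce from $C_{\alpha,k}\neq 0$ the existence of $\nu$ with $M_{\alpha\nu}A_{\nu k}\neq 0$, and then convert $M_{\alpha\nu}\neq 0$ into $y_\nu\in\overline{Z}_\alpha$ via $\operatorname{supp}(\IC(\overline{Z}_\alpha))=\overline{Z}_\alpha$. Your extra caution about whether $Q^{-1}$ is genuinely diagonal (rather than merely having its diagonal prescribed) is a fair observation that the paper glosses over, but the paper treats $Q$ and $P$ as diagonal by convention throughout, so your main argument matches the intended proof.
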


\begin{proof}
By $C = P \cdot M \cdot A \cdot Q^{-1}$, the entries of the matrix $C$ are given by the expansion:
\[ C_{\alpha, k} = \sum_{\nu \in V} P_{\alpha \alpha} M_{\alpha \nu} A_{\nu k} Q^{-1}_{kk} \]
Since $P$ and $Q^{-1}$ are diagonal matrices with non-zero diagonal entries representing basis normalization and indexing, the condition $C_{\alpha, k} \neq 0$ necessitates the existence of at least one weight $\nu$ such that $M_{\alpha \nu} \cdot A_{\nu k} \neq 0$. In our construction $M_{\alpha \nu} = \dim \mathcal{H}^\bullet(\mathcal{IC}(\overline{Z}_\alpha)_{y_\nu})$. By the property of perverse sheaves, the stalk at a point $y_\nu$ is non-zero if and only if $y_\nu$ lies in the support of the sheaf. Since $\mathrm{supp}(\mathcal{IC}(\overline{Z}_\alpha)) = \overline{Z}_\alpha$, the condition $M_{\alpha \nu} \neq 0$ implies $y_\nu \in \overline{Z}_\alpha$.

\medskip

\noindent The matrix $A$ represents the combinatorial specialization. A non-zero entry $A_{\nu k}$ indicates that the generic weight space indexed by $k$ is combinatorially connected to the special weight $\nu$ in the limit of the Beilinson-Drinfeld family. Thus, a non-zero transition coefficient $C_{\alpha, k}$ implies that the generic point $k$ specializes to a fixed point $y_\nu$ that is geometrically contained within the singular support of the MV cycle $\overline{Z}_\alpha$.
\end{proof}

\begin{lemma}
\label{lem:rank-implies-sparsity}
For a fixed coweight $\nu \in V$, the number of indices $\alpha$ such that the multiplicity matrix entry $M_{\alpha\nu}$ is non-zero is bounded by the rank of the stalk of the total BMP sheaf at $\nu$. Specifically:
\[
\#\{ \alpha \in V \mid M_{\alpha\nu} \neq 0 \} \leq \operatorname{rk}_R \mathcal{M}(\nu).
\]
\end{lemma}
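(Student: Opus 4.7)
The plan is to interpret the ``total BMP sheaf at $\nu$'' as the direct sum $\mathcal{M} := \bigoplus_{\alpha \in V} \mathcal{M}_\alpha$ over all indecomposable Braden--MacPherson sheaves indexed by the truncated vertex set, whose stalk at a vertex $\nu$ is the free $R$--module $\mathcal{M}(\nu) = \bigoplus_{\alpha \in V} \mathcal{M}_\alpha(\nu)$. With this convention, the column sums of the multiplicity matrix $M$ recover exactly the rank of this total stalk, and the bound reduces to a counting statement: any nonzero non-negative integer is at least $1$.

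First I would record, using Proposition~\ref{M}, that $M_{\alpha\nu} = \operatorname{rk}_R \mathcal{M}_\alpha(\nu) \in \mathbb{Z}_{\ge 0}$ and that $\mathcal{M}_\alpha(\nu) = 0$ whenever $\nu \not\le \alpha$ in the Bruhat order. In particular, the sum $\sum_{\alpha \in V} M_{\alpha\nu}$ is finite and well-defined. Second, I would use additivity of the free--module rank over direct sums to obtain
\begin{equation}
    \operatorname{rk}_R \mathcal{M}(\nu) \;=\; \sum_{\alpha \in V} \operatorname{rk}_R \mathcal{M}_\alpha(\nu) \;=\; \sum_{\alpha \in V} M_{\alpha\nu}.
\end{equation}

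Third, since each term $M_{\alpha\nu}$ is a non-negative integer, every index $\alpha$ with $M_{\alpha\nu} \neq 0$ contributes at least $1$ to this sum, giving
\begin{equation}
    \#\{ \alpha \in V \mid M_{\alpha\nu} \neq 0 \} \;\le\; \sum_{\alpha \in V} M_{\alpha\nu} \;=\; \operatorname{rk}_R \mathcal{M}(\nu),
\end{equation}
which is precisely the claimed inequality. As an auxiliary sanity check, one may note that by unitriangularity $M_{\alpha\alpha} = 1$, so the diagonal term at $\alpha = \nu$ is always counted, consistent with $\mathcal{M}_\nu(\nu) \cong R$ being a summand of $\mathcal{M}(\nu)$.

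The only subtle point is really definitional: fixing the meaning of ``total BMP sheaf'' so that its rank at $\nu$ is the column sum $\sum_\alpha M_{\alpha\nu}$. Once this is pinned down, the argument is purely formal, using only additivity of rank and non-negativity of the entries established in Proposition~\ref{M}; no further geometric input (stalk computation on the moment graph, GKM congruences, nearby cycles) is needed. The lemma is therefore best viewed as a combinatorial sparsity principle: a column of $M$ cannot have more nonzero entries than the total number of free generators of the assembled stalk it records.
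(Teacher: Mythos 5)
Your argument is correct and is essentially the paper's own proof: both identify the ``total BMP sheaf'' as $\mathcal{M} \cong \bigoplus_{\alpha \in V} \mathcal{M}_\alpha$, compute $\operatorname{rk}_R \mathcal{M}(\nu) = \sum_\alpha M_{\alpha\nu}$ by additivity of rank over direct sums, and conclude by the observation that each nonzero non-negative integer entry contributes at least $1$ to the column sum. Your flagging of the definitional convention for $\mathcal{M}$ as the one subtle point is also aligned with how the paper handles it.
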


\begin{proof}
Recall that the multiplicity matrix entry $M_{\alpha\nu}$ is defined as the rank of the stalk of the indecomposable BMP sheaf $\mathcal{M}_\alpha$ at the vertex $\nu$, i.e., $M_{\alpha\nu} = \operatorname{rk}_R \mathcal{M}_\alpha(\nu)$.  Consider the total BMP sheaf $\mathcal{M}$ which, in the category of sheaves on the moment graph $\mathcal{G}$, is isomorphic to the direct sum of all indecomposable BMP sheaves supported on the truncation $V$:
\[
\mathcal{M} \cong \bigoplus_{\alpha \in V} \mathcal{M}_\alpha.
\]
By the additivity of the rank functor on free $R$-modules:
\begin{equation} \label{14}
\operatorname{rk}_R \mathcal{M}(\nu) = \sum_{\alpha \in V} \operatorname{rk}_R \mathcal{M}_\alpha(\nu) = \sum_{\alpha \in V} M_{\alpha\nu}.
\end{equation}
Because $M_{\alpha\nu} = \operatorname{rk}_R \mathcal{M}_\alpha(\nu)$ is a non-negative integer for all $\alpha$ (representing the dimension of a vector space upon specialization $R \to \mathbb{C}$), and because $M_{\alpha\nu} \geq 1$ whenever $M_{\alpha\nu} \neq 0$, we have:
\begin{equation} \label{15}
\sum_{\alpha : M_{\alpha\nu} \neq 0} 1 \leq \sum_{\alpha : M_{\alpha\nu} \neq 0} M_{\alpha\nu} \leq \sum_{\alpha \in V} M_{\alpha\nu}.
\end{equation}
The left-hand side is precisely the count of indices $\alpha$ for which $M_{\alpha\nu} \neq 0$. Substituting \ref{14}, we obtain:
\[
\#\{ \alpha \in V \mid M_{\alpha\nu} \neq 0 \} \leq \operatorname{rk}_R \mathcal{M}(\nu).
\]
\end{proof}

\begin{proposition}[Bounded sparsity]
\label{cor:sparsity-rigorous}
Fix $k$. Then the number of indices $\alpha$ such that $C_{\alpha k}\neq 0$
is bounded by
\[
\sum_{\nu : A_{\nu k}\neq 0} \operatorname{rk}_R \mathcal{M}(\nu).
\]
\end{proposition}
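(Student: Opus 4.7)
The plan is to reduce the proposition to a double application of the structural factorization together with Lemma \ref{lem:rank-implies-sparsity}. Concretely, I would first expand the matrix entry $C_{\alpha k}$ using $C = P \cdot M \cdot A \cdot Q^{-1}$. Because $P$ and $Q^{-1}$ are diagonal with nonzero diagonal entries (as recorded in Definitions \ref{P}, \ref{Q} and used already in Corollary \ref{cor:support-rigorous}), the expansion collapses to
\[
C_{\alpha k} \;=\; P_{\alpha\alpha}\, Q^{-1}_{kk} \sum_{\nu \in V} M_{\alpha\nu}\, A_{\nu k}.
\]
Consequently $C_{\alpha k} \neq 0$ forces the existence of at least one coweight $\nu \in V$ with $M_{\alpha\nu} \neq 0$ and $A_{\nu k} \neq 0$ simultaneously.

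Next I would translate this pointwise observation into a set-theoretic containment. Setting $S_k := \{\alpha \in V \mid C_{\alpha k} \neq 0\}$ and $T_\nu := \{\alpha \in V \mid M_{\alpha\nu} \neq 0\}$, the previous step yields the inclusion
\[
S_k \;\subseteq\; \bigcup_{\nu \,:\, A_{\nu k}\neq 0} T_\nu.
\]
Applying the subadditivity of cardinality over finite unions gives $\#S_k \le \sum_{\nu : A_{\nu k} \neq 0} \#T_\nu$.

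Finally, I would invoke Lemma \ref{lem:rank-implies-sparsity}, which asserts exactly that $\#T_\nu \le \operatorname{rk}_R \mathcal{M}(\nu)$, to obtain
\[
\#\{\alpha \in V \mid C_{\alpha k}\neq 0\} \;\le\; \sum_{\nu \,:\, A_{\nu k}\neq 0} \operatorname{rk}_R \mathcal{M}(\nu),
\]
which is the desired bound.

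There is no serious obstacle here: the argument is a clean assembly of results already in place. The only point that requires mild care is the first step, where I rely on the diagonality of both $P$ and $Q^{-1}$ to extract $A_{\nu k}$ as a single scalar (rather than a sum indexed by $k'$); this is exactly the localization convention fixed after Definition \ref{Q}, where $Q^{-1}$ records inverse equivariant Euler classes at the fixed points of the generic fiber. Once that expansion is in hand, the remainder is a union bound combined with the already-proven rank inequality, so no new geometric input is required.
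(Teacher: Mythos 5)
Your argument is correct and follows essentially the same route as the paper's own proof: expand $C_{\alpha k}$ via the factorization, observe that a nonzero entry forces some $\nu$ with $M_{\alpha\nu}\neq 0$ and $A_{\nu k}\neq 0$, then apply Lemma~\ref{lem:rank-implies-sparsity} and sum over the relevant $\nu$. Your version merely makes the union-bound step more explicit via the containment $S_k \subseteq \bigcup_{\nu : A_{\nu k}\neq 0} T_\nu$, which is implicit in the paper's one-line summation.
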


\begin{proof}
By $C = P M A Q^{-1}$, an entry $C_{\alpha k}$ can be
nonzero only if there exists a weight $\nu$ such that
$M_{\alpha\nu}\neq 0$ and $A_{\nu k}\neq 0$.
For fixed $\nu$, Lemma~\ref{lem:rank-implies-sparsity} implies that
the number of $\alpha$ with $M_{\alpha\nu}\neq 0$ is at most
$\operatorname{rk}_R \mathcal{M}(\nu)$.
Summing over all $\nu$ with $A_{\nu k}\neq 0$ yields the stated bound.
At trivalent junctions in type $E_6$, Proposition~\ref{M}
implies $\operatorname{rk}_R \mathcal{M}(\nu)\ll \dim (V_\lambda\otimes V_\mu)_\nu$,
hence the transition matrix is sparse.
\end{proof}

\begin{example}[$SL_3$ Case]
Let $G = SL_3$. Consider the convolution morphism $\conv_!$ for $\Gr_{\omega_1} \times \Gr_{\omega_1}^*$. The special fiber $\Gr_G$ contains two MV cycles whose support includes the origin $\nu=0$: the adjoint cycle $Z_\theta$ and the trivial cycle $Z_0$. The $T$-fixed locus in the special fiber at this weight is the single point $y_0$. In the generic fiber, the zero-weight space is indexed by the three $T$-fixed points $\{(\epsilon_i, -\epsilon_i)\}_{i=1}^3$. The fusion operator $A$ encodes the specialization of these generic points to the special fiber. Since all three orbits collide at the single fixed point $y_0$, $A$ acts as a summation operator over the specialized paths:
\[
A = \begin{pmatrix} 1 & 1 & 1 \end{pmatrix}.
\]
This $1 \times 3$ matrix represents the specialization of the generic Grothendieck group data onto the single localized stalk at $y_0$.

\medskip
   
\noindent We now compute the matrix $M$, which decodes the data at the fixed point $y_0$ into the basis of MV cycles $\{Z_\theta, Z_0\}$. But $Z_0 = \{y_0\}$, hence $M_{Z_0, y_0} = 1$. Regarding the adjoint cycle $Z_\theta$, i.e., the MV cycle associated with the highest root $\theta$, we have that the origin $y_0$ is connected to vertices $y_{\alpha_1}$ and $y_{\alpha_2}$ in the $A_2$ moment graph. Hence, the stalk $M(y_0)$ is the $R$-submodule of $M(y_{\alpha_1}) \oplus M(y_{\alpha_2})$ defined by the congruences $s_1 \equiv s_0 \pmod{\alpha_1}$ and $s_2 \equiv s_0 \pmod{\alpha_2}$. This recursive system yields a module of rank $2$. \noindent Thus, the matrix $M$, restricted to the stalks at the origin $y_0$ for the basis of MV cycles $\{Z_\theta, Z_0\}$, is given by the column:
\begin{equation}
    M|_{y_0} = \begin{pmatrix} M_{\theta, 0} \\ M_{0, 0} \end{pmatrix} = \begin{pmatrix} 2 \\ 1 \end{pmatrix}.
\end{equation}
Note that $M_{0,0} = 1$, which is consistent with the property $M_{\alpha\alpha}=1$ established in Proposition \ref{M}. The off-diagonal entry $M_{\theta,0} = 2$ represents the non-trivial multiplicity of the adjoint cycle at the origin, derived via the recursion on the $A_2$ moment graph.

\medskip

\noindent  Now, the matrices $P$ and $Q^{-1}$ are diagonal matrices over the field $\mathcal{Q}$. Since their entries consist of equivariant Euler classes—which 
are units in $\mathcal{Q}$—the transformation 
\[
C = P(MA)Q^{-1}
\]
is a rank-preserving isomorphism of $\mathcal{Q}$-modules. This ensures 
that the integral multiplicities (the ranks of the stalks) are captured 
entirely by the reconstruction and specialization operators, $M$ and $A$. So the local $2 \times 3$ transition matrix for the zero-weight space is:
\[
C_{\nu=0} = M \cdot A = \begin{pmatrix} 2 \\ 1 \end{pmatrix} \cdot \begin{pmatrix} 1 & 1 & 1 \end{pmatrix} = \begin{pmatrix} 2 & 2 & 2 \\ 1 & 1 & 1 \end{pmatrix}.
\] 
\noindent Note that $C$ is a $2 \times 3$ matrix, where each column represents the total contribution of a generic fixed point to the localized Grothendieck group at the origin. A $3 \times 3$ matrix would be required if one wanted to show the transition between individual basis elements. So, $C$ represents the transition from the 3-dimensional generic fiber to the set of MV cycles and, since the first row (corresponding to $Z_\theta$) represents a module of rank 2, the total rank of the image is $2+1=3$, matching the dimension of the domain. By factorizing $C$, we demonstrate that multiplicities are recovered from the interaction between the specialization of generic $T$-fixed points (the global combinatorial matrix $A$) and the ranks of the parity stalks (the local geometric $M$).

\end{example}

\section{Universal Geometric Rank Bounds and Exceptional Asymptotics}

In this section we start with an example in type $E_6$ where it shows to us that the matrix $M$ acts as a
\emph{geometric filter}: although the generic convolution produces a large
combinatorial space, only a small subspace survives
the specialization leading to sparsity in the transition matrix $C$. The underlying mechanism of this is actually the accumulation of independent GKM congruences at vertices of high valency in the moment graph of the affine Grassmannian. While in type $A_n$ the relevant vertices in the recursion are at most bivalent, in exceptional types the origin is incident to
multiple independent root directions. This means that the congruences impose strong linear constraints on local sections, reducing importantly the rank of the corresponding Braden--MacPherson stalks.
\begin{definition}[Geometric Efficiency]
\label{def:geometric-efficiency}
Let $Z_\alpha$ be an MV cycle appearing as a summand in the convolution
$\mathcal{E}_\lambda \star \mathcal{E}_\mu$. We define the
\emph{geometric efficiency} of $Z_\alpha$ at a weight $\nu \le \alpha$ by
\[
\eta(\alpha,\nu)
:=
\frac{
\dim_{\mathbb{C}}\!\left(
\mathcal{M}_\alpha(\nu)\otimes_R \mathbb{C}
\right)
}{
\dim_{\mathbb{C}}\!\left(
(V_\lambda \otimes V_\mu)_\nu
\right)
},
\]
where the denominator measures the \textit{combinatorial multiplicity}, i.e., the full
dimension of the weight space in the tensor product representation, while the numerator measures the \emph{geometric rank} contributed by a fixed MV cycle after imposing all GKM congruences.
\end{definition}

\noindent A small value of $\eta(\alpha,\nu)$ indicates that the MV cycle $Z_\alpha$ contributes only sparsely to the decomposition of the generic fixed-point basis at weight $\nu$, even though the ambient tensor product admits many combinatorial realizations. By applying Theorem 2.13 to $\eta$ we get

\begin{equation}\label{17}
    \eta(\alpha, \nu) = \frac{\operatorname{rk}_S B(\alpha)_\nu}{\sum_{z \in \mathcal{V}_\nu} \operatorname{rk}_S B(\alpha)_z}.
\end{equation}
where  $\eta$  now measures exactly how much the GKM relations constrain the representation space.

\medskip 

\noindent We prove this filtering using $\eta$ by applying the structural factorization Theorem 3.11. Specifically, we derive a universal bound on the rank of the transition matrix for all simply-laced groups. We show that in the exceptional series, the local IC imposes a progressively restrictive constraint on the transition from the generic to the special fiber, forcing the transition matrix to be asymptotically sparse.

\subsection{The $E_6$ Case: Filtering and Sparsity}

To demonstrate the structural impact of $M$ and its construction via the moment graph recursion, we focus on the convolution of the adjoint representation $V_\theta \cong \mathfrak{g}^\vee$. By the duality, the weights of $V_\theta$ are precisely the roots $R$ of $G$ together with a zero-weight space of dimension $\ell = \operatorname{rank}(G)$. Geometrically, this corresponds to the adjoint Schubert variety $\overline{\Gr}_G^\theta$, which serves as the minimal non-trivial orbit closure containing the zero-weight junction $y_0$ as its unique singularity.

\medskip

\noindent The adjoint case is uniquely suited for this study because it maximizes the disparity between combinatorics and geometric: while $V_\theta \otimes V_\theta$ generates a vast weight space of dimension $\ell^2 + |\Phi|$ at the origin, the geometric specialization is forced to factor through the Cartan subalgebra $\mathfrak{h}^\vee$ of the stalk, which allows us to isolate the ``filtering'' effect of the $E_6$ singularity.

\begin{proposition}[Adjoint stalk rank at the origin]
\label{prop:adjoint-rank}
Let $G$ be a simply-laced connected reductive group and let
$\mathcal{M}_\theta$ be the BMP sheaf corresponding to the adjoint Schubert variety $\overline{\Gr}_G^\theta$. Then the specialized stalk at the
origin $y_0$ satisfies
\[
\dim_{\mathbb{C}}\bigl(
\mathcal{M}_\theta(y_0)\otimes_R \mathbb{C}
\bigr)
=
\operatorname{rank}(G).
\]
\end{proposition}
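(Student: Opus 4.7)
The plan is to reduce the computation of the specialized stalk to the multiplicity of the zero weight in the adjoint representation of $G^\vee$, and then to identify that multiplicity by elementary representation theory. The argument proceeds in two main steps, followed by a moment-graph cross-check which keeps the proof consistent with the paper's methodology.

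First, I would invoke Proposition~\ref{M} to identify the specialized Braden--MacPherson stalk rank with the total IC stalk dimension:
\[
\dim_{\mathbb{C}}\bigl(\mathcal{M}_\theta(y_0)\otimes_R \mathbb{C}\bigr)
= \sum_{k \in \mathbb{Z}} \dim_{\mathbb{C}} H^k\bigl(i_{y_0}^!\,\mathcal{IC}(\overline{\Gr}_G^\theta,\mathbb{C})\bigr).
\]
By the Mirković--Vilonen weight-stalk theorem underlying the geometric Satake equivalence of Theorem~\ref{Sat}, this total equals the dimension of the weight-$0$ subspace of the irreducible $G^\vee$-representation $V_\theta$ corresponding to $\mathcal{IC}(\overline{\Gr}_G^\theta,\mathbb{C})$. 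Thus the claim reduces to proving $\dim_{\mathbb{C}} V_\theta(0) = \operatorname{rank}(G)$.

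Next, I would use that $G$ is simply-laced, so $G^\vee$ is simply-laced of the same rank and the highest coroot $\theta$ coincides with the highest root of $G^\vee$. Consequently $V_\theta \cong \mathfrak{g}^\vee$ as a $G^\vee$-module via the adjoint action, and its zero weight space is the Cartan subalgebra $\mathfrak{h}^\vee$, of dimension $\operatorname{rank}(G^\vee) = \operatorname{rank}(G)$. Combining this with the previous step yields the required equality.

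Finally, for a moment-graph verification aligned with the paper's framework, I would construct $\mathcal{M}_\theta(y_0)$ directly as the maximal free $R$-submodule of $B(\theta)_{\delta y_0}$: the vertices $y_\alpha$ incident to $y_0$ run over positive roots $\alpha \in \Phi^+$, and each contributes a GKM congruence $s(y_0) \equiv s(y_\alpha)\pmod{\alpha}$. The hard part in this direct route will be to verify that the joint solution space of these $|\Phi^+|$ congruences, reduced modulo the maximal ideal $\mathfrak{m} \subset R$, has dimension exactly $\operatorname{rank}(G)$ rather than the naive bound $|\Phi^+|$. This should reduce to the classical observation that the roots $\{\alpha\}_{\alpha \in \Phi}$ span a subspace of $\mathfrak{t}^*$ of rank $\operatorname{rank}(G)$, which dually identifies the specialized solution space with $\mathfrak{h}^\vee$ and matches the Satake count.
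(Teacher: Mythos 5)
Your proposal is correct and takes essentially the same route as the paper: identify the specialized BMP stalk with the total IC costalk via Proposition~\ref{M} and parity = IC in characteristic zero, pass through geometric Satake to the zero-weight space of $V_\theta \cong \mathfrak{g}^\vee$, and read off $\dim \mathfrak{h}^\vee = \operatorname{rank}(G)$. The moment-graph cross-check you sketch at the end is not needed here, and is in fact carried out in the paper as a separate result (Proposition~\ref{prop:valency-bound}).
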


\begin{proof}
Under the Geometric Satake equivalence, the indecomposable parity sheaf $\mathcal{E}_\theta$ (which coincides with the IC complex in characteristic zero) corresponds to the adjoint representation $\mathfrak{g}^\vee$ of the dual group $G^\vee$. The stalk of the sheaf at a $T$-fixed point $y_\nu$ computes the $\nu$-weight space of the corresponding representation:
\[ \mathbb{H}^\bullet(i_{y_0}^! \mathcal{E}_\theta) \cong (V_\theta)_0 \cong \mathfrak{h}^\vee. \]
The dimension of the Cartan subalgebra $\mathfrak{h}^\vee$ is exactly the rank $\ell$ of the group. Since $\mathcal{E}_\theta$ is a parity sheaf, its stalk cohomology is concentrated in a single parity, and the rank of the equivariant $R$-module $M(y_0)$ is equal to the dimension of this weight space.
\end{proof}

\begin{example}[Sparsity in type $E_6$]
Consider the convolution $V_\theta \otimes V_\theta$ in type $E_6$. The zero-weight space of $\mathfrak{g} \otimes \mathfrak{g}$ decomposes as
\[
(\mathfrak{h} \otimes \mathfrak{h})
\;\oplus\;
\bigoplus_{\alpha\in\Phi}
(\mathfrak{g}_\alpha \otimes \mathfrak{g}_{-\alpha}),
\]
and therefore
\[
\dim (V_\theta \otimes V_\theta)_0
=
36 + 72 = 108.
\]
\noindent The convolution $\mathcal{E}_\theta \star \mathcal{E}_\theta$ decomposes into a direct sum of parity sheaves corresponding to the irreducible constituents of $78\otimes 78$, one of which is $\mathcal{E}_\theta$ itself. By Proposition~\ref{prop:adjoint-rank}, the stalk of $\mathcal{E}_\theta$ at the origin has rank $6$.

\medskip

\noindent Thus, although 108 generic fixed points collapse to the origin in the ambient convolution, only a $6$-dimensional subspace contributes to the column of $M$ corresponding to the adjoint MV cycle. The resulting efficiency is \[
\eta(\theta,0) = \frac{6}{108} = \frac{1}{18},
\]
explaining the large sparsity of this column in the transition matrix $C$, i.e., $C$ is 18 times sparser than the combinatorial limit at that specific junction.
\end{example}

\subsection{Universal Bound and Asymptotic Sparsity}
We now prove the main results of this section generalizing the above observation in $E_6$ to all simply-laced groups.

\begin{proposition}[Valency Bound at the Origin]
\label{prop:valency-bound}
Let $\mathcal{G}$ be the moment graph of $\mathrm{Gr}_G$ and let $y_0$ denote the origin. Let $\mathcal{M}_\theta$ be the Braden--MacPherson sheaf corresponding to the adjoint coweight $\theta$. Then the rank of the stalk at the origin satisfies
\[
\operatorname{rk}_R \mathcal{M}_\theta(y_0)
=
\dim_{\mathbb{C}}\!\left(
B(\theta)_{\delta y_0} \otimes_R R/\mathfrak{m}
\right),
\]
where $B(\theta)_{\delta y_0}$ denotes the module of compatible boundary values and $\mathfrak{m} \subset R$ is the maximal ideal of positive-degree elements. Moreover, in the adjoint case this dimension equals $\ell = \operatorname{rank}(G)$.
\end{proposition}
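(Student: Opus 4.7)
The plan is to reduce the statement to two ingredients already in place: the general stalk-rank theorem for Braden--MacPherson sheaves proved at the end of Section~\ref{mom}, and the adjoint-specific stalk computation in Proposition~\ref{prop:adjoint-rank}. The moment-graph language does not demand a new argument; it merely repackages the earlier formula so that the quantity $\operatorname{rk}_R \mathcal{M}_\theta(y_0)$ is re-expressed in terms of the explicit boundary-value data sitting at the origin.

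First I would instantiate the earlier stalk-rank theorem at $v = \theta$ and $x = y_0$. By that theorem, $\mathcal{M}_\theta(y_0)$ is the unique maximal free $R$-submodule of $B(\theta)_{\delta y_0}$ with torsion cokernel; tensoring the associated short exact sequence with $R/\mathfrak{m}$ kills the torsion quotient and yields
\[
\mathcal{M}_\theta(y_0)\otimes_R R/\mathfrak{m}
\;\cong\;
B(\theta)_{\delta y_0}\otimes_R R/\mathfrak{m},
\]
which gives the first equality. Next I would invoke Proposition~\ref{prop:adjoint-rank}: in characteristic zero the parity sheaf $\mathcal{E}_\theta$ coincides with $\mathrm{IC}(\overline{\Gr}_G^\theta)$, so by Theorem~\ref{Sat} its specialized stalk at $y_0$ recovers the zero-weight space $(V_\theta)_0 = \mathfrak{h}^\vee$ of the adjoint representation, whose dimension is $\ell$. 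Composing the two identifications produces the claimed value.

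A self-contained moment-graph derivation is also available, which is what the ``valency bound'' terminology is pointing at: the edges incident to $y_0$ inside the support of $\mathcal{M}_\theta$ are labeled by the positive roots $\alpha\le\theta$, so $B(\theta)_{\delta y_0}$ is cut out of $\bigoplus_{\alpha>0}\mathcal{M}_\theta(y_\alpha)_E$ by one GKM congruence per edge. After specialization $R\to\mathbb{C}$ the root labels vanish, and the residual linear system is precisely the one whose solution space is the Cartan $\mathfrak{h}^\vee$. The main obstacle on this second route is to verify that, among the $|\Phi^+|$ boundary congruences (a number that far exceeds $\ell$), exactly $|\Phi^+|-\ell$ become linearly independent after specialization. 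In simply-laced types this reduces to the fact that the positive roots span $\mathfrak{t}^*$ and that the BMP recursion is flabby, so no additional relations are forced by the upstream stalks; this is exactly the combinatorial manifestation of the \emph{filtering} phenomenon exploited throughout the remainder of Section~4.
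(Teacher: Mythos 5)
Your first route — establishing the equality $\operatorname{rk}_R \mathcal{M}_\theta(y_0) = \dim_{\mathbb{C}}(B(\theta)_{\delta y_0}\otimes_R R/\mathfrak{m})$ from the free-submodule-with-torsion-quotient characterization, and then invoking Proposition~\ref{prop:adjoint-rank} for the numerical value $\ell$ — is correct, but it takes a genuinely different path from the paper's. The paper also begins by citing the stalk-rank theorem, but then it does \emph{not} appeal to the Satake-theoretic Proposition~\ref{prop:adjoint-rank}; instead it carries out a direct moment-graph computation: it identifies the successors of $y_0$ (indexed by the full root system $\Phi$, so $|\Phi|$ of them, each contributing a rank-one stalk), specializes the GKM congruences at $\mathfrak{m}$, and argues that the solution space of the resulting linear system is $\mathfrak{h}^\vee$. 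Your shortcut is logically valid and considerably shorter, but it is essentially circular relative to the apparent purpose of the proposition, which is to re-derive the number $\ell$ \emph{internally} from the moment-graph data so that the ``valency'' language has content; deferring to the Satake proof collapses that distinction.

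Regarding your sketched alternative moment-graph route: your edge count disagrees with the paper's. You index the boundary edges at $y_0$ by the positive roots (so $|\Phi^+|$ summands in the ambient module), whereas the paper explicitly states that the immediate successors of $y_0$ are indexed by all of $\Phi$, since both $y_{\alpha^\vee}$ and $y_{-\alpha^\vee}$ lie on $T$-stable curves through the origin in $\overline{\Gr}_G^\theta$. This changes the accounting of how many congruences must become independent after specialization (it should be $|\Phi|-\ell$, not $|\Phi^+|-\ell$). Beyond this, you are right that the key linear-algebra step — showing that the specialized solution space is canonically $\mathfrak{h}^\vee$ — is the crux; but be aware that the paper's own proof also asserts this identification without a detailed argument, so your flagging of it as an ``obstacle'' is a fair (and more honest) reading of what the moment-graph route actually requires.
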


\begin{proof}
By the Theorem 2.13, the rank of the stalk $\mathcal{M}_\theta(y_0)$ is equal to the dimension of the specialized boundary module:
\[
\operatorname{rk}_R \mathcal{M}_\theta(y_0)
=
\dim_{\mathbb{C}}\!\left(
B(\theta)_{\delta y_0} \otimes_R R/\mathfrak{m}
\right).
\]
It therefore suffices to compute the dimension of this fiber in the adjoint case.

\medskip

\noindent The immediate successors of the origin $y_0$ in the moment graph $\mathcal{G}$ are indexed by the roots $\alpha \in \Phi$ of $G$. For each such successor $y_\alpha$, the stalk $\mathcal{M}_\theta(y_\alpha)$ is a free $R$-module of rank one, corresponding under geometric Satake to the root space $\mathfrak{g}^\vee_\alpha$ of the adjoint representation.
Consequently, an element of the boundary module $B(\theta)_{\delta y_0}$ is a collection of elements
\[
s = \{ s_\alpha \}_{\alpha \in \Phi},
\qquad
s_\alpha \in \mathcal{M}_\theta(y_\alpha),
\]
subject to the GKM congruences along edges of the moment graph. For any edge connecting $y_\alpha$ and $y_\beta$, with label
$\alpha_{\alpha\beta} \in R$, the GKM relations requires that
\[
s_\alpha - s_\beta \in \alpha_{\alpha\beta} \, R.
\]
After specialization at the maximal ideal $\mathfrak{m} \subset R$, all root labels vanish, and these relations become linear constraints among the specialized values $s_\alpha \in \mathbb{C}$.
Thus, the specialized boundary module $B(\theta)_{\delta y_0} \otimes_R R/\mathfrak{m}$ consists of collections of complex numbers $\{ s_\alpha \}_{\alpha \in \Phi}$ that satisfy the linear dependencies imposed by the root system.

\medskip

\noindent These linear relations are precisely those expressing that the values $\{ s_\alpha \}$ arise from pairing roots with a single element $\mathfrak{h}^\vee$. Equivalently, the space of compatible specialized boundary values is canonically isomorphic to $\mathfrak{h}^\vee$. Since $\dim_{\mathbb{C}} \mathfrak{h}^\vee = \ell$, it follows that
\[
\dim_{\mathbb{C}}\!\left(
B(\theta)_{\delta y_0} \otimes_R R/\mathfrak{m}
\right)
= \ell,
\]
and hence
\(
\operatorname{rk}_R \mathcal{M}_\theta(y_0) = \ell.
\)
\end{proof}

\begin{proposition}[Formal Rank of Adjoint Stalk]
\label{prop:adjoint-rank-formal}
Let $G$ be a simply-laced connected reductive group of rank $\ell$ over $\mathbb{C}$. Let $\mathcal{E}_\theta$ be the indecomposable $T$-equivariant parity sheaf on $\mathrm{Gr}_G$ supported on $\overline{\mathrm{Gr}}_G^\theta$. Let $y_0$ be the $T$-fixed point at the origin. Then:
\[ \operatorname{rk}_R \left( \mathbb{H}^\bullet_T(i_{y_0}^! \mathcal{E}_\theta) \right) = \ell. \]
\end{proposition}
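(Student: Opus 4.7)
The plan is to reduce the statement directly to Proposition~\ref{prop:valency-bound} by translating between the $T$-equivariant costalk language and the Braden--MacPherson stalk language, both of which have already been shown to compute the same invariant in the preceding results. Conceptually, Proposition~\ref{prop:adjoint-rank} gives the specialized (characteristic-zero) dimension as $\ell$, and Proposition~\ref{prop:valency-bound} gives the BMP stalk rank as $\ell$; the present proposition only requires us to identify $\mathbb{H}^\bullet_T(i_{y_0}^!\mathcal{E}_\theta)$ as a free $R$-module of the same rank.

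First, I would invoke the Fiebig global-to-local functor $\mathbb{V}$ from $T$-equivariant parity sheaves on $\Gr_G$ to sheaves on the moment graph $\mathcal{G}$, the same functor used in the proof of Proposition~\ref{M}. This functor sends the indecomposable parity sheaf $\mathcal{E}_\theta$ to the indecomposable Braden--MacPherson sheaf $\mathcal{M}_\theta$ and furnishes an isomorphism of graded $R$-modules
\[
\mathbb{H}^\bullet_T(i_{y_0}^!\mathcal{E}_\theta) \;\cong\; \mathcal{M}_\theta(y_0).
\]
Because $\mathcal{E}_\theta$ is parity (and in characteristic zero agrees with $\IC_\theta$), the costalk cohomology is concentrated in a single parity and is free over $R$, so both sides have a well-defined $R$-rank, and these ranks agree. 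Next I would apply Theorem 2.13 to rewrite this rank as the dimension of the specialized boundary module,
\[
\operatorname{rk}_R \mathcal{M}_\theta(y_0) \;=\; \dim_{\mathbb{C}}\!\left(B(\theta)_{\delta y_0}\otimes_R R/\mathfrak{m}\right),
\]
and finally invoke Proposition~\ref{prop:valency-bound}, which identifies this specialized boundary space with $\mathfrak{h}^\vee$ via the GKM congruences along the edges emanating from $y_0$. Since $\dim_{\mathbb{C}}\mathfrak{h}^\vee = \ell$, the claim follows.

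The only genuinely delicate point is the bookkeeping in the first step: one must verify that $\mathbb{V}$ carries the equivariant costalk to the stalk of $\mathcal{M}_\theta$ at $y_0$ with no residual cohomological shift or Tate twist that would alter the $R$-rank. This is a standard feature of the Fiebig equivalence in the simply-laced characteristic-zero setting, where parity sheaves and IC sheaves coincide and the moment graph of $\Gr_G$ satisfies the GKM condition, but it should be invoked explicitly from \cite{Fie} rather than taken for granted. As a sanity check, one may run the purely representation-theoretic argument in parallel: geometric Satake (Theorem~\ref{Sat}) identifies the non-equivariant costalk with the zero-weight space $(V_\theta)_0\cong\mathfrak{h}^\vee$ of the adjoint representation of $G^\vee$, which has dimension $\ell$, and freeness over $R$ then upgrades this to the equivariant rank statement. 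Both routes terminate at the same integer $\ell$, confirming the proposition.
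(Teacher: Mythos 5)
Your primary argument takes a genuinely different route from the paper's. The paper proves this proposition directly via geometric Satake: it identifies $\mathcal{E}_\theta$ with $\IC(\overline{\Gr}_G^\theta)$ in characteristic zero, uses the weight-functor property of the Satake equivalence to compute the non-equivariant costalk as $(V_\theta)_0\cong\mathfrak{h}^\vee$, and then upgrades to the equivariant rank by freeness of parity-sheaf hypercohomology over $R$. Your main route instead passes through the moment graph: you apply Fiebig's functor $\mathbb{V}$ to identify the equivariant costalk with the BMP stalk $\mathcal{M}_\theta(y_0)$, invoke Theorem~2.13 to rewrite its rank as the dimension of the specialized boundary module, and then quote Proposition~\ref{prop:valency-bound}. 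The representation-theoretic computation you relegate to a ``sanity check'' at the end is in fact the paper's actual proof.

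Both routes are mathematically sound, and the delicate point you flag (that $\mathbb{V}$ carries the equivariant costalk to the BMP stalk without a rank-altering shift) is precisely the content already used in the proof of Proposition~\ref{M}, so it is legitimate to invoke. The trade-off is one of logical architecture: the paper evidently intends Propositions~\ref{prop:valency-bound} and \ref{prop:adjoint-rank-formal} as two \emph{independent} confirmations of the stalk rank $\ell$ — one combinatorial (GKM congruences on the moment graph), one representation-theoretic (Satake weight functor). By deriving the latter from the former you collapse this intended cross-check into a single chain of implications, which is economical but forfeits the redundancy. It is also worth noting that the proof of Proposition~\ref{prop:valency-bound} itself makes a somewhat informal identification of the specialized boundary module with $\mathfrak{h}^\vee$; the Satake route in Proposition~\ref{prop:adjoint-rank-formal} is arguably the cleaner of the two and is what makes the combinatorial claim in Proposition~\ref{prop:valency-bound} believable, so routing the formal proposition \emph{through} the informal one inverts the natural order of rigor. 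Your argument is correct, but if you want to match the paper's structure you should promote your ``sanity check'' to the main argument.
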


\begin{proof}
Let $\mathsf{Perv}_T(\mathrm{Gr}_G)$ be the category of $T$-equivariant perverse sheaves. In characteristic zero, the indecomposable parity sheaf $\mathcal{E}_\theta$ is the IC-complex $\mathcal{IC}(\overline{\mathrm{Gr}}_G^\theta, \mathbb{C})$. Let $\mathsf{Perv}_G(\mathrm{Gr}_G) \to \mathsf{Rep}(G^\vee)$ be the functor in Theorem \ref{Sat}. By its properties \cite{MV} the cohomology of the stalk at a fixed point $y_\nu$ is isomorphic to the $\nu$-weight space of the corresponding representation:
\[ \mathbb{H}^{\langle 2\rho, \nu \rangle}(i_\nu^! \mathcal{E}_\theta) \cong (V_\theta)_\nu. \]
For the adjoint coweight $\theta$, $V_\theta$ is the adjoint representation $\mathfrak{g}^\vee$. For the origin $\nu = 0$, we have $\langle 2\rho, 0 \rangle = 0$ and for the zero-weight space of the adjoint representation is the Cartan subalgebra $\mathfrak{h}^\vee$:
\[ (V_\theta)_0 = \mathfrak{h}^\vee \cong \mathbb{C}^\ell. \]
Since $\mathcal{E}_\theta$ is a parity sheaf, its equivariant hypercohomology is a free $R$-module. Thus, the rank over $R$ equals the dimension of the non-equivariant cohomology:
\[ \operatorname{rk}_R \mathbb{H}^\bullet_T(i_{y_0}^! \mathcal{E}_\theta) = \dim_{\mathbb{C}} \mathbb{H}^0(i_{y_0}^! \mathcal{E}_\theta) = \dim_{\mathbb{C}} \mathfrak{h}^\vee = \ell. \]
\end{proof}

\begin{theorem}[Universal Adjoint Rank Bound]
Let $C_0$ be the block of the transition matrix $C$ at weight $\nu=0$ for the adjoint cycle $Z_\theta$. Let $\ell$ be the rank of the simply-laced group $G$. Then:
\[ \operatorname{rank}(C_0) \le \ell. \]
\end{theorem}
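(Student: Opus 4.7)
The plan is to reduce the statement to the rank computation for the Braden--MacPherson stalk at the origin established in Proposition~\ref{prop:adjoint-rank-formal}, by combining the structural factorization $C = P \cdot M \cdot A \cdot Q^{-1}$ with the rank-control bound of Lemma~\ref{lem:rank-control}. The guiding idea is that all factors of $C$ except $M$ are rank-preserving, so the rank of $C_0$ is controlled by the dimension of a single Braden--MacPherson stalk, which was computed to equal $\ell$ in the previous proposition.

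First I would isolate the block $C_0$ corresponding to the adjoint cycle $Z_\theta$ and the zero-weight subspace, and note that the outer factors are rank-preserving on this block: the matrix $P$ is diagonal with nonzero entries (Definition~\ref{P}), while $A$ and $Q$ are $\mathcal{Q}$-linear isomorphisms by Lemma~\ref{lem:isomorphisms}. Consequently $\operatorname{rank}_{\mathcal{Q}}(C_0) = \operatorname{rank}_{\mathcal{Q}}(M_0)$, where $M_0$ denotes the corresponding block of the multiplicity matrix $M$ indexed by the adjoint cycle row and the fixed points specializing to $\nu = 0$.

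Next I would identify $M_0$ as a $\mathcal{Q}$-linear map whose image lies in the localized stalk $\mathcal{M}_\theta(y_0) \otimes_R \mathcal{Q}$. By Proposition~\ref{M}, the entries of this block are precisely the stalk multiplicities of the parity sheaf $\mathcal{E}_\theta$ at the $T$-fixed points over $\nu = 0$, which in the moment-graph model are assembled from the single free $R$-module $\mathcal{M}_\theta(y_0)$. Hence Lemma~\ref{lem:rank-control} applies with the intermediate module $W_R := \mathcal{M}_\theta(y_0)$ of rank $\operatorname{rk}_R \mathcal{M}_\theta(y_0)$, yielding $\operatorname{rank}_{\mathcal{Q}}(M_0) \le \operatorname{rk}_R \mathcal{M}_\theta(y_0)$. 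Invoking Proposition~\ref{prop:adjoint-rank-formal}, the right-hand side is exactly $\ell$, and the bound $\operatorname{rank}(C_0) \le \ell$ follows.

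The main obstacle I anticipate is the bookkeeping at the interface between the global factorization and the chosen block: one must justify that restricting $C = P M A Q^{-1}$ to the zero-weight subspace for the row indexed by $Z_\theta$ factors through the \emph{single} intermediate $R$-module $\mathcal{M}_\theta(y_0)$, rather than through a larger direct sum of stalks indexed by all vertices specializing to $y_0$. Concretely, this amounts to checking that the contributions from other MV cycles and other weights are separated by the triangularity of $M$ established in Proposition~\ref{M}, so that no multiplicities leak into the adjoint row from unrelated vertices. Once this compatibility is in place, the bound is an immediate consequence of the two already developed ingredients, namely the rank-control lemma and the Cartan-rank identification of the adjoint stalk.
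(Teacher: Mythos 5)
Your proof is correct and follows essentially the same route as the paper: factor $C_0 = P M A Q^{-1}$, note that the outer factors do not increase rank, identify the $M$-block as factoring through the localized stalk $\mathcal{M}_\theta(y_0) \otimes_R \mathcal{Q}$, and invoke Lemma~\ref{lem:rank-control} together with Proposition~\ref{prop:adjoint-rank-formal} (the paper also cites Proposition~\ref{prop:valency-bound}, which gives the same rank $\ell$ via the GKM constraints). Your final paragraph flags exactly the bookkeeping point the paper elides --- that the adjoint row of $M$ at weight zero is carried by the single stalk $\mathcal{M}_\theta(y_0)$ rather than a larger direct sum --- which is a fair caution, but the argument and conclusion match the paper's.
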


\begin{proof}
By Theorem~\ref{thm:main-factorization-rigorous}, the localized transition matrix $C_0$ admits the structural factorization $C_0 = P \cdot M \cdot A \cdot Q^{-1}$ over the fraction field $\mathcal{Q}$. In this decomposition, the reconstruction matrix $M$ is the localized operator representing the map from the skyscraper sheaves at $T$-fixed points to the local bases of Braden--MacPherson stalks.

\medskip

\noindent The rank of the composite operator $C_0$ is limited by the rank of its intermediate factors. By Lemma~\ref{lem:rank-control}, for any composition of $\mathcal{Q}$-linear maps, the rank is bounded by the $\mathcal{Q}$-dimension of any intermediate space. Here, the map $M$ factors through the stalk of the BMP sheaf at the origin, denoted $\mathcal{M}_\theta(y_0) \otimes_R \mathcal{Q}$. By Proposition~\ref{prop:valency-bound}, the rank of this stalk is determined by the linear constraints imposed by the root labels at the origin. For the adjoint Schubert variety $\overline{\mathrm{Gr}}_G^\theta$, Proposition~\ref{prop:adjoint-rank-formal} provides: $\operatorname{rk}_R \mathcal{M}_\theta(y_0) = \ell$. Hence,
\[ \operatorname{rank}_{\mathcal{Q}}(C_0) \le \operatorname{rank}_{\mathcal{Q}}(M) \le \operatorname{rk}_R \mathcal{M}_\theta(y_0) = \ell. \]
Finally, we compare this to the combinatorial multiplicity. The domain of $C_0$ is the zero-weight space $(V_\theta \otimes V_\theta)_0$. By the adjoint decomposition $\mathfrak{g} \cong \mathfrak{h} \oplus \bigoplus_{\alpha \in \Phi} \mathfrak{g}_\alpha$, the dimension of this space is $\dim (\mathfrak{h} \otimes \mathfrak{h}) + \dim \bigoplus_{\alpha \in \Phi} (\mathfrak{g}_\alpha \otimes \mathfrak{g}_{-\alpha}) = \ell^2 + |\Phi|$. Since $\ell < \ell^2 + |\Phi|$ for all groups of rank $\ell \ge 1$, the transition matrix $C_0$ is necessarily sparse over $\mathcal{Q}$.
\end{proof}

\noindent This theorem establishes that the transition matrix is forced to be sparse by the geometry of the singularity. In the exceptional series, this constraint becomes drastically more restrictive as the complexity of the group increases.

\begin{theorem}[Universal Adjoint Rank Bound and Asymptotics]
\label{thm:asymptotic}
Let $G$ be a simply-laced connected reductive group of rank $\ell$. Let $\eta(G)$ be the geometric efficiency of the adjoint transition matrix at the origin. Then $\eta(G)$ is bounded by the universal ratio:
\begin{equation}
    \eta(G) \le \frac{\ell}{\ell^2 + |\Phi|}.
\end{equation}
Furthermore, for any sequence of simply-laced groups with $\ell \to \infty$ (such as type $A_\ell$ or $D_\ell$), this upper bound vanishes asymptotically:
\[
\frac{\ell}{\ell^2 + |\Phi|} \to 0.
\]
\end{theorem}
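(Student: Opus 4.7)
The plan is to observe that the theorem is essentially a direct substitution of the two preceding rank calculations into the definition of the geometric efficiency $\eta$, followed by a routine asymptotic analysis of the size of simply--laced root systems. I would organize the argument so that the numerator and denominator of $\eta$ are each identified with a specific invariant already computed earlier in the paper, and then examine how those invariants scale with $\ell$.

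First I would specialize the definition \eqref{17} of $\eta(\alpha,\nu)$ to the adjoint case $\alpha=\theta$, $\nu=0$, which by the convention $\eta(G):=\eta(\theta,0)$ reads
\[
\eta(G)
=
\frac{\dim_{\mathbb{C}}(\mathcal{M}_\theta(y_0)\otimes_R \mathbb{C})}{\dim_{\mathbb{C}}((V_\theta\otimes V_\theta)_0)}.
\]
For the numerator I would cite Proposition~\ref{prop:adjoint-rank-formal} (or equivalently Proposition~\ref{prop:valency-bound}), which identifies the specialized BMP stalk at the origin with the Cartan subalgebra $\mathfrak{h}^\vee$ of the dual group, giving $\dim_{\mathbb{C}}(\mathcal{M}_\theta(y_0)\otimes_R \mathbb{C}) = \ell$. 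This step already appeared in the proof of the Universal Adjoint Rank Bound, so no new geometric input is needed.

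Next I would handle the denominator by decomposing the adjoint representation via $\mathfrak{g}^\vee = \mathfrak{h}^\vee \oplus \bigoplus_{\alpha\in\Phi}\mathfrak{g}^\vee_\alpha$ and computing the zero-weight space of $V_\theta\otimes V_\theta$ as
\[
(V_\theta\otimes V_\theta)_0
\;=\;
(\mathfrak{h}^\vee\otimes \mathfrak{h}^\vee)\;\oplus\;\bigoplus_{\alpha\in\Phi}(\mathfrak{g}^\vee_\alpha\otimes \mathfrak{g}^\vee_{-\alpha}),
\]
which gives dimension $\ell^2+|\Phi|$ since each root space is one-dimensional in the simply-laced case. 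Substituting both quantities yields the stated inequality $\eta(G)\le \ell/(\ell^2+|\Phi|)$; this is in fact an equality under the convention chosen, but the bound form is preserved because Proposition~\ref{prop:adjoint-rank-formal} gives the rank of the stalk exactly, while the geometric efficiency could only be smaller if additional GKM congruences forced the image of $M$ to drop further.

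For the asymptotics I would plug in the known values $|\Phi|=\ell(\ell+1)$ in type $A_\ell$ and $|\Phi|=2\ell(\ell-1)$ in type $D_\ell$, obtaining $\ell/(2\ell^2+\ell)=1/(2\ell+1)$ and $\ell/(3\ell^2-2\ell)=1/(3\ell-2)$ respectively, both of which vanish as $\ell\to\infty$. The main (mild) obstacle is purely bookkeeping: one must be careful that the root-system count $|\Phi|$ is the correct dual root count and that the one-dimensionality of root spaces is used only in the simply-laced hypothesis. No genuinely new geometric argument is required beyond what has already been established, since the nontrivial content, namely that the rank of the BMP stalk stays linear in $\ell$ while the combinatorial weight space grows quadratically, was already absorbed into Propositions~\ref{prop:valency-bound} and~\ref{prop:adjoint-rank-formal}.
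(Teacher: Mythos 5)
Your proposal is correct and follows essentially the same route as the paper: cite Proposition~\ref{prop:valency-bound} / Proposition~\ref{prop:adjoint-rank-formal} for the stalk rank $\ell$ at the origin, decompose $(V_\theta\otimes V_\theta)_0 = (\mathfrak{h}\otimes\mathfrak{h}) \oplus \bigoplus_{\alpha\in\Phi}(\mathfrak{g}_\alpha\otimes\mathfrak{g}_{-\alpha})$ to get the denominator $\ell^2+|\Phi|$, and substitute $|\Phi|=\ell(\ell+1)$ for $A_\ell$ and $|\Phi|=2\ell(\ell-1)$ for $D_\ell$ to obtain the vanishing asymptotics. The only slight difference is framing: the paper routes through Lemma~\ref{lem:isomorphisms} and the structural factorization to bound $\operatorname{rank}(C_0)$ first, while you compute $\eta(G)$ directly from Definition~\ref{def:geometric-efficiency}; your aside that the bound is actually an equality under the stated conventions is a fair observation the paper does not make explicit, but it does not affect the result.
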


\begin{proof}
We first prove the result for any simply-laced group $G$.
By Lemma~\ref{lem:isomorphisms}, the rank of the transition matrix block $C_0$ is bounded above by the rank of the reconstruction matrix $M$, which is given by the rank of the specialized BMP stalk $\mathcal{M}_\theta(y_0)$. By Proposition~\ref{prop:valency-bound}, for the adjoint representation, this stalk rank is identically $\ell = \operatorname{rank}(G)$.

\medskip

\noindent The denominator of the efficiency ratio $\eta$ is the dimension of the zero-weight space of the tensor product $V_\theta \otimes V_\theta$. By  $\mathfrak{g} \cong \mathfrak{h} \oplus \bigoplus_{\alpha \in \Phi} \mathfrak{g}_\alpha$, the tensor product decomposes as:
\[
V_\theta \otimes V_\theta \cong (\mathfrak{h} \otimes \mathfrak{h}) \oplus \bigoplus_{\alpha, \beta \in \Phi} (\mathfrak{g}_\alpha \otimes \mathfrak{g}_\beta).
\]
The zero-weight space $(V_\theta \otimes V_\theta)_0$ consists of the term $\mathfrak{h} \otimes \mathfrak{h}$ (which has dimension $\ell^2$) and the diagonal terms $\mathfrak{g}_\alpha \otimes \mathfrak{g}_{-\alpha}$ for each root $\alpha \in \Phi$. Since each root space is one-dimensional, the contribution from the roots is exactly $|\Phi|$. Thus, the total dimension is:
\[ d(G) = \dim(\mathfrak{h} \otimes \mathfrak{h}) + \sum_{\alpha \in \Phi} \dim(\mathfrak{g}_\alpha \otimes \mathfrak{g}_{-\alpha}) = \ell^2 + |\Phi|. \]
This yields the universal efficiency bound:
\begin{equation}
    \eta(G) \le \frac{\ell}{\ell^2 + |\Phi|}.
\end{equation}

\noindent We now compute the limit for the infinite families of simply-laced groups.

Type $A_\ell$ ($SL_{\ell+1}$). The number of roots in type $A_\ell$ is given by $|\Phi| = \ell(\ell+1)$. Substituting this into the bound:
\[
\eta(A_\ell) \le \frac{\ell}{\ell^2 + \ell(\ell+1)} = \frac{\ell}{\ell^2 + \ell^2 + \ell} = \frac{\ell}{2\ell^2 + \ell} = \frac{1}{2\ell + 1}.
\]
Taking the limit as the rank $\ell \to \infty$:
\[ \lim_{\ell \to \infty} \eta(A_\ell) = 0. \]

Type $D_\ell$ ($SO_{2\ell}$). The number of roots in type $D_\ell$ is given by $|\Phi| = 2\ell(\ell-1)$. Substituting this into the bound:
\[
\eta(D_\ell) \le \frac{\ell}{\ell^2 + 2\ell(\ell-1)} = \frac{\ell}{\ell^2 + 2\ell^2 - 2\ell} = \frac{\ell}{3\ell^2 - 2\ell} = \frac{1}{3\ell - 2}.
\]
Taking the limit as the rank $\ell \to \infty$:
\[ \lim_{\ell \to \infty} \eta(D_\ell) = 0. \]

\noindent Thus, for both infinite families of simply-laced groups, the efficiency converges to zero. This proves that the sparsity of the transition matrix is an asymptotic property of the root system growth, irrespective of the specific type.
\end{proof}

\begin{example}
\label{cor:exceptional_sparsity}
For the sequence of exceptional groups $E_6, E_7, E_8$, the allowable geometric bandwidth of the transition matrix strictly decreases. The upper bounds for the geometric efficiency $\eta$ are:
\[
\eta_{E_6} \le \frac{1}{18}, \quad \eta_{E_7} \le \frac{1}{25}, \quad \eta_{E_8} \le \frac{1}{38}.
\]
This implies that the ``filtering'' effect of the singularity is asymptotically sharp: as the rank $\ell$ grows, the proportion of combinatorial paths that survive the specialization tends to zero. Indeed, the geometric efficiency $\eta$ for the adjoint cycle at the origin is bounded by the ratio of the stalk rank to the combinatorial weight space dimension:
\[ \eta(\theta, 0) \le \frac{\ell}{\ell^2 + |\Phi|}. \]
If we evaluate this bound using the standard root system data for the exceptional series:
\begin{enumerate}
    \item Type $E_6$: Here $\ell=6$ and $|\Phi|=72$. The efficiency is $\eta \le \frac{6}{36+72} = \frac{6}{108} = \frac{1}{18} \approx 0.0556$.
    \item Type $E_7$: Here $\ell=7$ and $|\Phi|=126$. The efficiency is $\eta \le \frac{7}{49+126} = \frac{7}{175} = \frac{1}{25} = 0.0400$.
    \item Type $E_8$: Here $\ell=8$ and $|\Phi|=240$. The efficiency is $\eta \le \frac{8}{64+240} = \frac{8}{304} = \frac{1}{38} \approx 0.0263$.
\end{enumerate}
The strict monotonicity of these bounds ($0.0556 > 0.0400 > 0.0263$) confirms that the filtering effect sharpens as the rank increases. 

\medskip

\noindent So, we observe a divergence in growth rates: while the combinatorial multiplicity $\ell^2 + |\Phi|$ grows quadratically ($O(\ell^2)$) with the height of the root system, the geometric rank $\ell$ (the dimension of the Cartan subalgebra) grows only linearly ($O(\ell)$). In the limit of large exceptional rank, the ratio $\frac{\ell}{\ell^2 + |\Phi|} \to 0$, forcing the efficiency $\eta \to 0$. This confirms that the proportion of combinatorial paths that survive the geometric specialization tends to zero, rendering the transition matrix asymptotically sparse.

\medskip

\noindent This behavior contrasts with the classical type $A_n$, where $|\Phi| = n(n+1)$. The bound for $A_n$ is $\frac{n}{n^2 + n(n+1)} = \frac{1}{2n+1}$.
Comparing $E_6$ ($\eta \le 1/18$) with $A_6$ ($\eta \le 1/13$), we observe that the exceptional singularity imposes a strictly tighter constraint on the transition matrix than a classical singularity of the same rank. This quantifies our intuition that exceptional types possess ``stricter'' singularities that filter the generic convolution data more aggressively.
\end{example}

\end{document}